\newtheorem{theorem}{Theorem}[section]
\newtheorem{lemma}[theorem]{Lemma}
\newtheorem{proposition}[theorem]{Proposition}
\newtheorem{corollary}[theorem]{Corollary}
\newtheorem{definition}[theorem]{Definition}
\newtheorem{remark}[theorem]{Remark}
\newcommand{\IF}{\mathbb{F}}
\newcommand{\IP}{\mathbb{P}}
\newcommand{\Gal}{\mathcal{G}al}
\begin{document}

\title{Uniqueness of low genus optimal curves over $\IF_{2}$}

\author{Alessandra Rigato}

\date{}
\maketitle

\begin{abstract}
A projective, smooth, absolutely irreducible algebraic curve $X$ of genus $g$ defined over a finite field $\IF_{q}$ is called \emph{optimal} if for every other such genus $g$ curve $Y$ over $\IF_{q}$ one has $\#Y(\IF_{q})\le\#X(\IF_{q})$. In this paper we show that for $g\le 5$ there is a unique optimal genus $g$ curve over $\IF_{2}$. For $g=6$ there are precisely two and for $g=7$ there are at least two.
\end{abstract}

\maketitle

\section{Introduction}

Let $X$ be a projective, smooth and absolutely irreducible genus $g$ curve defined over a finite field $\IF_{q}$. It is well known that the number of $\IF_{q}$-rational points of $X$ is bounded and a lot of research has been done to determine whether the bounds are sharp: see for example Sections 5.2 and 5.3 of \cite{stichtenoth} for an overview. The curve $X$ is called \emph{optimal} if for every other genus $g$ curve $Y$ over $\IF_{q}$ one has $\#Y(\IF_{q})\le\#X(\IF_{q})$. The main result of this paper deals with uniqueness up to $\IF_{2}$-isomorphism of small genus optimal curves defined over $\IF_{2}$. 

\begin{theorem}\label{uniquecurve}
For $g\leq5$, there exists a unique optimal genus $g$ curve defined over $\IF_{2}$. There exist two non-isomorphic genus $6$ optimal curves and at least two non-isomorphic genus $7$ optimal curves defined over $\IF_{2}$.
\end{theorem}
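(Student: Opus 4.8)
The plan is to reduce the problem, genus by genus, to a finite classification governed by the Weil polynomial of the curve. For a genus $g$ curve $X/\IF_2$ with Frobenius eigenvalues $\omega_1,\overline{\omega_1},\dots,\omega_g,\overline{\omega_g}$ (so $\omega_j\overline{\omega_j}=2$ and $|\omega_j|=\sqrt2$ by the Riemann hypothesis for curves), the number of points over every extension is
\[
\#X(\IF_{2^n}) = 2^n+1-\sum_{j=1}^g\bigl(\omega_j^{\,n}+\overline{\omega_j}^{\,n}\bigr),
\]
so the entire sequence $\bigl(\#X(\IF_{2^n})\bigr)_n$ is encoded in the symmetric functions of the $\omega_j$, i.e.\ in the coefficients of the real Weil polynomial. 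First I would, for each $g\le 7$, pin down $N_2(g):=\max_X\#X(\IF_2)$ and, crucially, the complete list of Weil polynomials that an optimal curve can have. This uses two ingredients: Serre's explicit-formula (Oesterl\'e) optimisation to bound $\#X(\IF_2)$ from above, and the integrality/positivity constraints coming from the fact that, writing $a_d$ for the number of closed points of degree $d$, one has $a_d\in\mathbb{Z}_{\ge0}$ and $\#X(\IF_{2^n})=\sum_{d\mid n}d\,a_d$. Fixing $\#X(\IF_2)=N_2(g)$ fixes $\sum_j(\omega_j+\overline{\omega_j})$, and the nonnegativity of $a_2,a_3,\dots$ together with the Weil bounds then cut the finitely many remaining coefficients down to a short explicit list.

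Next I would, for each admissible Weil polynomial on that list, determine all curves realising it up to $\IF_2$-isomorphism. Here the classification of low-genus curves does the work. In the hyperelliptic range one writes $X\colon y^2+h(x)y=f(x)$ and uses the prescribed values of $\#X(\IF_2)$ and $\#X(\IF_4)$ to constrain the splitting of $h$ and $f$, leaving a finite search; in the non-hyperelliptic cases one passes to the canonical model --- a plane quartic for $g=3$, a $(2,3)$ complete intersection in $\IP^3$ for $g=4$, an intersection of three quadrics in $\IP^4$ for the general $g=5$ --- and enumerates these up to the relevant projective linear group, reading off the point counts. For $g\le5$ this should yield exactly one curve attaining the maximal count, giving uniqueness; for $g=6$ it should yield exactly two non-isomorphic curves, and for $g=7$ I would simply exhibit two.

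The main obstacle is this second step: the Weil polynomial (equivalently the isogeny class of the Jacobian) does not determine the curve, so proving that only one curve --- rather than several sharing the same zeta function --- is optimal requires genuinely geometric input. Two difficulties compound here. One must first guarantee that a numerically admissible Weil polynomial is actually realised by a curve at all, not merely by some abelian variety, and then one must show the realising curve is unique; Torelli's theorem reduces the latter to recognising which principally polarised abelian variety in the isogeny class is the Jacobian, but carrying this out concretely still forces the explicit canonical-model enumeration above. I expect the genus $5$, $6$, and $7$ analyses --- where the canonical models are cut out by several quadrics and the searches are largest --- to be the delicate part, and the appearance of a second optimal curve at $g=6$ to come precisely from an admissible Weil polynomial (or a curve) that the positivity constraints fail to exclude.
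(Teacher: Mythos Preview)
Your first step --- listing admissible real Weil polynomials via the Weil bounds and the positivity of the $a_d$ --- matches the paper's Theorem \ref{RealWeil}, but you are missing one ingredient: positivity alone does not cut the list down far enough. The paper also invokes Serre's resultant-$\pm1$ criterion (Proposition \ref{Res1}) and, for $g=6$, the Howe--Lauter refinement (Proposition \ref{EllFact}) to eliminate a spurious candidate. Without these, your list for $g=6$ would contain three polynomials rather than two, and you would need some other mechanism to rule the extra one out.

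Your second step is a genuinely different route. You propose to enumerate canonical models (plane quartics, $(2,3)$-intersections in $\IP^3$, nets of quadrics in $\IP^4$, \dots) up to projective equivalence and read off point counts. The paper does none of this. Instead, for each admissible Weil polynomial it uses the Howe--Lauter propositions to force the existence of a low-degree map from $X$ to a specific smaller curve (the optimal elliptic curve $E$ for $g=4,5,6$; $\IP^1$ for $g=2,3$), then realises $X$ as an explicit abelian cover --- a ray class field of prescribed conductor --- and shows uniqueness by checking that different choices of conductor data are related by automorphisms of the base. For $g=3$ the map comes instead from recognising $\mathbb{Z}[\zeta_7]$ inside $\mathrm{End}(\mathcal{J}ac(X))$ and applying Torelli. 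For the second $g=6$ polynomial the argument is the most delicate: $X$ is a non-Galois degree-$3$ cover of $E$, and uniqueness is extracted from the Galois closure and a further degree-$5$ unramified tower. What the paper's approach buys is that each uniqueness claim reduces to a small ray-class-group computation rather than a search over a moduli space; what your approach would buy, if carried out, is independence from the Howe--Lauter machinery, at the cost of large (and for $g\ge5$ not obviously tractable) enumerations that you have not bounded.
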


Examples of small genus optimal curves defined over $\IF_{2}$ are already present in \cite{serre}, \cite{serre1} and \cite{niederreiterxing}. In this paper we show that for genus $g\leq 5$ these examples are unique, while one of the genus $6$ curves we construct appears to be new.\\
The proof of this result consists of two steps. We first determine a short list of Zeta functions that an optimal curve over $\IF_{2}$ can have. In Section \ref{sec: zetafunctions} we show that for genus $g\leq 5$ there is only one possible Zeta function, while for $g=6$ there are two. Next we apply class field theory techniques as in \cite{auer}, \cite{lauter}, \cite{schoof}, \cite{serre}, \cite{serre1}, and recent results by Howe and Lauter in \cite{howelauter} to show that for each possible Zeta function there exists precisely one curve. In Section \ref{sec: unique01} we discuss curves of genus $0$ and $1$. Sections \ref{sec: unique2} to \ref{sec: unique6} are devoted to curves of genus $2$ to $6$. Finally, in Section \ref{sec: genus7} we exhibit two optimal genus $7$ curves with different Zeta functions.

\footnotetext{The author wishes to express her gratitude to her advisor Ren\'e Schoof, for this work would not have been possible without his precious help. The author also thanks Everett Howe for his interesting and constructive comments and Claus Fieker for his MAGMA computation. Part of this paper was written while the author was supported by the Fund for Scientific Research Flanders (F.W.O. Vlaanderen)}

\section{Zeta function and real Weil polynomial of a curve}\label{sec: zetafunctions}

Throughout this paper a curve is understood to be projective, smooth and absolutely irreducible over a finite field of definition $\IF_{q}$. In order to study optimal genus $g$ curves defined over $\IF_{q}$ it is of interest to determine the quantity
\[
N_{q}(g):=\max \{\#X(\IF_{q}) \, \arrowvert  \, X \textrm{ is a genus } g \textrm{ curve defined over } \IF_{q} \}.
\]
Then, an optimal genus $g$ curve $X$ defined over $\IF_{q}$ satisfies $\#X(\IF_{q})=N_{q}(g)$. Several methods have been developed in order to determine $N_{q}(g)$ for given $q$ and $g$. The progress is listed and continuously updated in the tables \cite{geervlugt}. In particular Serre determined very good upper bounds for the number of $\IF_{q}$-rational points in \cite{serre1}. For $q=2$ he gives the estimate $\#X(\IF_{2}) \le 0.83 g+5.35$. For $g\ge 2$ this improves the Hasse-Weil bound $\#X(\IF_{q})\leq q+1+\lfloor 2g\sqrt{q} \rfloor$. In \cite{serre} Serre also provided examples of genus $g$ curves defined over $\IF_{2}$ attaining these bounds. Hence for small genus curves he proved that $N_{2}(g)$ is as follows \cite[Theorem 5]{serre1}
\[
\begin{array}{|c|c|c|c|c|c|c|c|c|}
\hline
g & 0 & 1 & 2 & 3 & 4 & 5 & 6 & 7\\
\hline
N_{2}(g) & 3 & 5 & 6 & 7 & 8 & 9 & 10 & 10\\
\hline
\end{array}
\]
The Zeta function of a genus $g$ curve $X$ defined over $\IF_{q}$ is given by
\[
Z(t)=\prod_{d\geq 1}\frac{1}{(1-t^d)^{a_d}},
\]
where 
\[
a_{d}=\#\{ P\, \arrowvert \, P \textrm{ place of } X \textrm{ such that deg}\, P=d\}.
\]
In particular, $a_{1}=\#X(\IF_{q})$. The Zeta function $Z(t)$ is a rational function of the form
\[
Z(t) = \frac{L(t)}{(1-t)(1-qt)},
\]
where 
\[
L(t)  =  \prod_{i=1}^{g}(1-\alpha_{i} t)(1-\overline{\alpha_{i}} t)
\]
for certain $\alpha_{i} \in \mathbb{C}$ of absolute value $\sqrt{q}$. Therefore $L(t)=q^g t^{2g} + b_{2g-1}t^{2g-1}+\ldots + b_{1} t+1 \in \mathbb{Z}[t]$ is determined by the coefficients $b_{1}, \ldots, b_{g}$ which are in turn determined by the numbers $a_{1}, \ldots, a_{g}$. See for example \cite[Section 5.1]{stichtenoth} for more details.\\

To a genus $g$ curve $X$ having $L(t)$ as numerator of its Zeta function, we associate the so-called \emph{real Weil polynomial} of $X$:
\[
h(t)=\prod_{i=1}^{g}(t-\mu_i) \;\; \in\, \mathbb{Z}[t],
\]
where $\mu_{i}=\alpha_{i}+\overline{\alpha_{i}}$ is a real number in the interval $[-2\sqrt{q}, 2\sqrt{q}]$, for all $i=1,\ldots, g$. We have
\begin{equation}\label{relation}
L(t)=t^{g}h(qt+1/t).
\end{equation} 
One can hence turn the problem of determining the Zeta function of $X$ into the problem of determining the real Weil polynomial of $X$. Not every polynomial $h(t)$ with all zeros in the interval $[-2\sqrt{q}, 2\sqrt{q}]$ and with the property that
\[
\frac{L(t)}{(1-t)(1-qt)}=\prod_{d\ge 1}\frac{1}{(1-t^{d})^{a_{d}}}
\]
for certain integers $a_{d}\ge 0$ is necessarily the real Weil polynomial of a curve. The following result is due to Serre \cite[page Se 11]{serre}, \cite[Lemma 1]{lauter}.

\begin{proposition}  \label{Res1}
Let $h(t)$ be the real Weil polynomial of a curve $C$ over $\IF_{q}$. Then $h(t)$ cannot be factored as $h(t) = h_{1}(t)h_{2}(t)$, with $h_{1}(t)$ and $h_{2}(t)$ non-constant polynomials in $\mathbb{Z}[t]$ such that the resultant of $h_{1}(t)$ and $h_{2}(t)$ is $\pm 1$.
\end{proposition}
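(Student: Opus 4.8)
The plan is to translate the purely numerical statement about $h(t)$ into a statement about the Jacobian $J=\mathrm{Jac}(C)$ together with its canonical principal polarization, and then to derive a contradiction from the indecomposability of Jacobians. Recall that $h(t)$ is the characteristic polynomial of the endomorphism $\beta=\pi+\bar{\pi}$ of $J$, where $\pi$ is the Frobenius endomorphism and $\bar{\pi}$ its image under the Rosati involution $\dagger$ attached to the principal polarization $\lambda$; these satisfy $\pi\bar{\pi}=q$, so that $\bar{\pi}$ is the Verschiebung and $\beta\in\mathrm{End}(J)$ is Rosati-symmetric. The eigenvalues of $\beta$ are exactly the real numbers $\mu_i=\alpha_i+\overline{\alpha_i}$, whence $h(\beta)=0$ in $\mathrm{End}(J)$. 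Assuming for contradiction that $h=h_1 h_2$ with $h_1,h_2\in\mathbb{Z}[t]$ non-constant and $\mathrm{Res}(h_1,h_2)=\pm1$, I would build from this factorization a decomposition of $(J,\lambda)$ as a nontrivial product of principally polarized abelian varieties.

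The first step uses the resultant hypothesis to produce idempotents. Since $\mathrm{Res}(h_1,h_2)=\pm1$ lies in the ideal $(h_1,h_2)\subseteq\mathbb{Z}[t]$, there is a B\'ezout identity $a(t)h_1(t)+b(t)h_2(t)=1$ with $a,b\in\mathbb{Z}[t]$. Setting $e_1=(b\,h_2)(\beta)$ and $e_2=(a\,h_1)(\beta)$, one checks that $e_1+e_2=1$ and $e_1 e_2=(a b\,h)(\beta)=0$, so that $e_1,e_2$ are orthogonal idempotents lying in the commutative subring $\mathbb{Z}[\beta]\subseteq\mathrm{End}(J)$; integrality is automatic here, and is precisely what fails when the resultant is merely nonzero. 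The idempotent $e_1$ then splits $J$ honestly, giving an isomorphism $J\cong A_1\times A_2$ with $A_i=e_i J$ and $\dim A_i=\deg h_i>0$.

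The second step promotes this to a splitting of the polarization. Because $\pi^\dagger=\bar{\pi}$, the element $\beta$ is fixed by the Rosati involution, and hence so are the idempotents $e_1,e_2\in\mathbb{Z}[\beta]$. A symmetric idempotent decomposes the principal polarization, so $\lambda$ restricts to principal polarizations $\lambda_i$ on the $A_i$, and $(J,\lambda)\cong(A_1,\lambda_1)\times(A_2,\lambda_2)$ as principally polarized abelian varieties, both factors being of positive dimension.

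Finally I would invoke the indecomposability of Jacobians: the canonically polarized Jacobian of a smooth, geometrically connected curve is indecomposable as a principally polarized abelian variety, since its theta divisor (the image $W_{g-1}$ of the $(g-1)$-fold symmetric power of $C$) is irreducible, whereas the theta divisor of a product of positive-dimensional principally polarized abelian varieties is reducible. This contradicts the decomposition just obtained and proves the proposition. I expect the main obstacle to lie in the bookkeeping of the two middle steps, namely checking that the idempotents are genuine (integral) endomorphisms and are Rosati-symmetric, so that one really lands in the category of principally polarized abelian varieties; this is exactly the point at which the strong hypothesis $\mathrm{Res}(h_1,h_2)=\pm1$, rather than mere coprimality of $h_1$ and $h_2$, is indispensable.
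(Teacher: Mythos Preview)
The paper does not prove this proposition; it merely attributes it to Serre (with a reference to Lauter's write-up) and states it without argument. Your proposal is essentially Serre's original proof, and it is correct: produce orthogonal idempotents in $\mathbb{Z}[\pi+\bar\pi]$ from the B\'ezout relation afforded by the unit resultant, observe they are Rosati-fixed, split the principally polarized Jacobian accordingly, and contradict the irreducibility of the theta divisor.

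One small point of justification deserves tightening. You write that $h(t)$ is ``the characteristic polynomial'' of $\beta=\pi+\bar\pi$ and conclude $h(\beta)=0$; strictly, the characteristic polynomial of $\beta$ on the Tate module is $h(t)^2$, so Cayley--Hamilton alone gives only $h(\beta)^2=0$. The vanishing $h(\beta)=0$ follows because $\mathbb{Q}[\pi]$ is the center of the semisimple algebra $\mathrm{End}^0(J)$ (Tate), hence \'etale, so $\beta\in\mathbb{Q}[\pi]$ satisfies the radical of $h(t)$ and in particular $h(\beta)=0$. With that clarified, the remaining steps (the symmetric idempotent forces the off-diagonal blocks of $\lambda$ to vanish, and a product theta divisor is reducible while $W_{g-1}$ is irreducible) go through exactly as you outline.
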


This result has been generalized by E.~Howe and K.~Lauter. Proposition \ref{Res2} below is an improvement \cite{howelauterslides} of \cite[Theorem 1.b)]{howelauter} and Proposition \ref{EllFact} is \cite[Theorem 1, Proposition 13]{howelauter}. Recall that the \emph{reduced resultant} of two polynomials $f, g \in \mathbb{Z}[t]$ is defined to be the non-negative generator of the ideal $(f,g) \cap \mathbb{Z}$.

\begin{proposition} \label{Res2}
Let $h(t)=h_{1}(t)h_{2}(t)$ be the real Weil polynomial of a curve $C$ over $\IF_{q}$, where $h_{1}(t)$ and $h_{2}(t)$ are coprime non-constant factors in $\mathbb{Z}[t]$. Let $r$ be the reduced resultant of the radical of $h_{1}(t)$ and the radical of $h_{2}(t)$. If $r= 2$, then, there exists a degree $2$ map $C \to C'$, where the curve $C'$ is defined over $\IF_{q}$ and has either $h_{1}(t)$ or $h_{2}(t)$ as real Weil polynomial.  
\end{proposition}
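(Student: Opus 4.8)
The plan is to translate the factorization $h=h_1h_2$ into a geometric decomposition of the Jacobian $J=\operatorname{Jac}(C)$, carrying along its canonical principal polarization $\lambda$, and then to recognize a degree-$2$ morphism from the fact that the two resulting factors are glued to one another only along a $2$-torsion subgroup scheme. Write $\psi=\pi+\hat{\pi}\in\operatorname{End}(J)$, where $\pi$ is the Frobenius endomorphism and $\hat{\pi}=q\pi^{-1}$ its Rosati conjugate; then $\psi$ is a symmetric endomorphism defined over $\IF_q$, its characteristic polynomial on the Tate module is $h(t)^2$, and since Frobenius (hence $\psi$) is semisimple its minimal polynomial is $\operatorname{rad}(h)$, so that $h(\psi)=0$. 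Because $h_1$ and $h_2$ are coprime in $\mathbb{Q}[t]$, the abelian subvarieties $A_1=\operatorname{im}h_2(\psi)$ and $A_2=\operatorname{im}h_1(\psi)$ satisfy $A_1+A_2=J$ and have real Weil polynomials $h_1$ and $h_2$ respectively, and the addition map $A_1\times A_2\to J$ is an isogeny whose kernel is the anti-diagonal copy of the finite group scheme $\Delta=A_1\cap A_2$.

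I would next bound this gluing group. By semisimplicity the minimal polynomial of the restriction of $\psi$ to $A_i$ divides $\operatorname{rad}(h_i)$, so every point of $\Delta=A_1\cap A_2$ is annihilated by both $\operatorname{rad}(h_1)(\psi)$ and $\operatorname{rad}(h_2)(\psi)$. Choosing $a,b\in\mathbb{Z}[t]$ with $a\operatorname{rad}(h_1)+b\operatorname{rad}(h_2)=r$, the defining relation of the reduced resultant, we find that multiplication by $r$ kills $\Delta$. It is precisely this use of the radicals --- legitimate here only because $\psi$ acts semisimply --- that constitutes the improvement of \cite{howelauterslides} over \cite{howelauter}. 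In our case $r=2$, so $\Delta\subseteq J[2]$ is a $2$-torsion group scheme.

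Finally I would transport the polarization. As $\psi$ is fixed by the Rosati involution, $A_1$ and $A_2$ are orthogonal complements for $\lambda$, so $\lambda$ induces polarizations $\lambda_1,\lambda_2$ on $A_1,A_2$ whose kernels are both isomorphic to $\Delta$. With $\Delta$ a $2$-torsion group scheme, $(A_1,\lambda_1)$ is exactly the kind of polarized abelian variety that, by the structural analysis of Howe and Lauter together with the Torelli theorem, is realized by a curve: one takes $C'$ to be the image of the composite of an Abel--Jacobi map $C\to J$ with the quotient $J\to J/A_2$, which is isogenous to $A_1$, and the $2$-torsion gluing forces $C\to C'$ to have degree $2$, as in the Prym description of a double cover. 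Since $\psi$, the subvarieties $A_1,A_2$, and the polarization $\lambda$ are all defined over $\IF_q$, both $C'$ and the morphism descend to $\IF_q$, and $C'$ has real Weil polynomial $h_1$ --- or, exchanging the roles of $A_1$ and $A_2$, $h_2$.

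The step I expect to be the main obstacle is this last one: producing a genuine curve $C'$ and a genuine finite morphism $C\to C'$ of the correct degree over $\IF_q$, rather than merely the isogeny factor $A_1$ of the Jacobian. This requires the full polarization-theoretic machinery of Howe and Lauter --- analysing the induced polarization $\lambda_1$ finely enough to match the numerical data of a degree-$2$ cover --- and some care in characteristic $2$, where $J[2]$ is non-reduced and so $\Delta$ need not be \'etale. The passage from resultants to reduced resultants of radicals in the annihilation of $\Delta$ is the secondary technical point.
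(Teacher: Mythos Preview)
The paper does not prove this proposition at all: it is quoted as a result of Howe and Lauter, with the reference ``\cite{howelauterslides}'' (an improvement of \cite[Theorem 1.b)]{howelauter}) given in lieu of an argument. So there is no proof in the paper to compare your proposal against.

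That said, your outline is faithful to the actual Howe--Lauter strategy: split $J$ via $\psi=\pi+\hat\pi$ into complementary abelian subvarieties $A_1,A_2$ with real Weil polynomials $h_1,h_2$, use the Bezout identity for $\operatorname{rad}(h_1),\operatorname{rad}(h_2)$ to show the gluing group $\Delta=A_1\cap A_2$ is killed by $r=2$, and then analyse the induced polarizations. You are also right that the passage from resultant to reduced resultant of the radicals is exactly the refinement of \cite{howelauterslides} over \cite{howelauter}.

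Where your sketch is genuinely incomplete is the final step, and you flag this yourself. The sentence ``one takes $C'$ to be the image of the composite of an Abel--Jacobi map $C\to J$ with the quotient $J\to J/A_2$ \ldots\ and the $2$-torsion gluing forces $C\to C'$ to have degree $2$'' is not an argument: nothing in what you have written rules out that this image has the wrong genus, or that the map has degree $1$ (i.e.\ that $C$ already sits inside one factor). In the Howe--Lauter proof the work here is substantial: one shows that the induced polarization $\lambda_i$ on $A_i$ has kernel a self-dual subgroup scheme of $A_i[2]$, and then a case analysis (depending on whether this kernel is trivial, all of $A_i[2]$, or strictly in between) produces either a principal polarization on $A_i$ or on a $2$-isogenous abelian variety, together with an embedding of $C$ into a product that projects with degree $2$ onto a curve in one factor. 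Invoking ``the Prym description of a double cover'' runs the logic backwards --- Prym theory starts from a double cover and produces the polarized decomposition, not the other way around --- so this step, as written, is a gap rather than a proof.
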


\begin{proposition} \label{EllFact}
Let $h(t) = (t - \mu)h_{2}(t)$ be the real Weil polynomial of a curve $C$ over $\IF_{q}$, where $t - \mu$ is the real Weil polynomial of an elliptic curve $E$ and $h_{2}(t)$ a non-constant polynomial in $\mathbb{Z}[t]$ coprime with $t-\mu$. If $r \neq \pm1$ is the resultant of $t-\mu$ and the radical of $h_{2}(t)$, then $C$ admits a map of degree dividing $r$ to an elliptic curve isogenous to $E$.
\end{proposition}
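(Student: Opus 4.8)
The plan is to convert the factorization of the real Weil polynomial into a splitting of the Jacobian $J=\mathrm{Jac}(C)$ up to isogeny, and then to exploit the canonical principal polarization of $J$ to read off a map from $C$ to an elliptic quotient whose degree is pinned down by the resultant. Write $\pi$ for the Frobenius endomorphism of $J$ and $s=\pi+\pi^{\vee}$ (Frobenius plus Verschiebung) for the ``real Frobenius''; then $h$ is the characteristic polynomial of $s$, and since Frobenius acts semisimply on the Tate module the minimal polynomial of $s$ is squarefree, so $\mathrm{rad}(h)(s)=0$. Put $g=\mathrm{rad}(h_{2})$, which is coprime to $t-\mu$, so that $\mathrm{rad}(h)=(t-\mu)\,g$. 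Because $t-\mu$ is monic linear, the reduced resultant of $t-\mu$ and $g$ equals the resultant $r=\pm\,g(\mu)$, and there are $a,b\in\mathbb{Z}[t]$ with
\[
a(s)\,(s-\mu)+b(s)\,g(s)=[\,r\,].
\]
Setting $A_{1}=\mathrm{im}\,g(s)$ and $B=\mathrm{im}\,(s-\mu)$, one checks that $s$ acts exactly as $\mu$ on $A_{1}$ (so $A_{1}$ is an elliptic curve isogenous to $E$), that $g(s)$ kills $B$, that $A_{1}+B=J$, and, reading the displayed identity on $A_{1}\cap B$, that $A_{1}\cap B\subseteq J[r]$.

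First I would construct the map. Let $E'=J/B$; since $A_{1}\cap B$ is finite and $A_{1}+B=J$, the composite $A_{1}\hookrightarrow J\to J/B$ is an isogeny, so $E'$ is an elliptic curve isogenous to $E$. Composing the Abel--Jacobi embedding with the quotient gives
\[
\psi\colon\ C\ \xrightarrow{\ \mathrm{AJ}\ }\ J\ \longrightarrow\ J/B=E'.
\]
This morphism is non-constant, because the image of $C$ generates $J$, whereas a constant $\psi$ would force the image of $C$ into a single translate of $B$, which generates only the proper subvariety $B$. Thus $\psi$ is a non-constant map from $C$ to an elliptic curve isogenous to $E$, and everything reduces to bounding $\deg\psi$.

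The degree is governed by the principal polarization $\lambda\colon J\xrightarrow{\sim}\widehat{J}$. Because $s$ is fixed by the Rosati involution, $B=\mathrm{im}(s-\mu)$ is, up to finite index, the orthogonal complement $A_{1}^{\perp}$ of $A_{1}$ under $\lambda$; consequently $A_{1}\cap B$ is the kernel of the restricted polarization
\[
\lambda_{1}\colon\ A_{1}\hookrightarrow J\ \xrightarrow{\ \lambda\ }\ \widehat{J}\ \longrightarrow\ \widehat{A_{1}}.
\]
Such a kernel is self-dual for the pairing induced by $\lambda$, so its order is a perfect square $e^{2}$ and $\lambda_{1}=e\cdot(\text{principal})$; this exhibits $E'$ as the principally polarized elliptic curve in the isogeny class of $A_{1}$ and gives $\deg\psi=e$. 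Finally, $A_{1}\cap B\subseteq A_{1}[r]$ forces the exponent of this self-dual group to divide $r$, whence $e\mid r$ and $\deg\psi\mid r$, as claimed.

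I expect the self-duality step to be the main obstacle. The naive containment $A_{1}\cap B\subseteq A_{1}[r]$ only bounds the order of the kernel by $r^{2}$; the improvement to $\deg\psi=e\mid r$ relies on identifying $A_{1}\cap B$ with the kernel of a polarization and using that such kernels carry a perfect (self-dual) pairing, forcing square order and making $e$ divide the exponent $r$. Establishing $B=A_{1}^{\perp}$ cleanly via the Rosati involution, and checking that $\psi$ realizes the multiplier $e$ rather than $e^{2}$, are the delicate points; they are exactly where the principal polarization of a Jacobian is indispensable, and where the present linear-factor situation lets the reduced resultant appearing in Propositions \ref{Res1} and \ref{Res2} be replaced by the full resultant $r$.
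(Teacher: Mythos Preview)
The paper does not prove Proposition~\ref{EllFact}; it is quoted from Howe and Lauter \cite[Theorem~1, Proposition~13]{howelauter} and used as a black box. So there is no proof in the paper to compare your attempt against.

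That said, your sketch follows the strategy that underlies the Howe--Lauter result: split $J=\mathrm{Jac}(C)$ via the symmetric endomorphism $s=\pi+\pi^{\vee}$, identify an elliptic factor $A_{1}$ and a complementary $B$ with $A_{1}\cap B\subseteq J[r]$, and then use the canonical principal polarization to show that the Abel--Jacobi map followed by the quotient $J\to J/B$ has degree dividing $r$. Two points you flag as delicate are genuinely the places where work is needed. First, the identification $B=A_{1}^{\perp}$ does follow from the Rosati-symmetry of $g(s)$ and $s-\mu$: one checks on Tate modules that $(\mathrm{im}\,g(s))^{\perp}=\ker g(s)$, and then a dimension count gives $(\ker g(s))^{0}=\mathrm{im}(s-\mu)=B$; but you should also note that $\ker\lambda_{1}=A_{1}\cap A_{1}^{\perp}$ a priori involves the full kernel scheme, not just its identity component, so a small extra argument is needed to get $\ker\lambda_{1}=A_{1}\cap B$ on the nose. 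Second, the equality $\deg\psi=e$ comes from $\psi_{*}\psi^{*}=[\deg\psi]$ together with the fact that $\psi_{*}$ is the quotient $q\colon J\to J/B$ and $\mathrm{im}\,\psi^{*}=B^{\perp}=A_{1}$; this yields $(\deg\psi)^{2}=|A_{1}\cap B|\cdot|\ker\psi^{*}|$, so you must either show $\psi^{*}$ is injective or, if it is not, pass to the quotient $E'/\ker\psi^{*}$ (still isogenous to $E$) to force injectivity before reading off $e$. Finally, the step ``$A_{1}[e]\subseteq A_{1}[r]\Rightarrow e\mid r$'' is safest phrased at the level of group schemes (or endomorphisms) rather than geometric points, to avoid characteristic-$p$ pitfalls when $p\mid e$.

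In short: your outline is essentially the Howe--Lauter argument, and the obstacles you anticipate are real but surmountable along the lines above; the present paper, however, simply imports the statement and does not rehearse any of this.
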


For a curve $X$ we denote by $a(X)$ the vector $[a_{1}, a_{2},\ldots]$. The main result of this section is the following.

\begin{theorem} \label{RealWeil} 
For $g\le 6$ the real Weil polynomial $h(t)$ and the vector $a(X)$ of an optimal genus $g$ curve $X$ over $\IF_{2}$ are as follows:
\begin{align}
g=1:\; h(t)&=t+2, &a(X)&=[5, 0, 0, 5, 4, 10, \ldots]; \nonumber \\
g=2:\; h(t)&=t^2 + 3t + 1, &a(X)&=[6, 0, 1, 1, 6, 12, \dots]; \nonumber \\ g=3:\; h(t)&=t^3 + 4t^2 + 3t - 1, &a(X)&=[7, 0, 1, 0, 7, 7, \ldots]; \nonumber \\
g=4:\; h(t)&=(t + 1)(t + 2)(t^2 + 2t - 2), &a(X)&=[8, 0, 0, 2, 4, 8, \ldots]; \nonumber \\
g=5:\; h(t)&=t(t + 2)^2(t^2 + 2t - 2), &a(X)&=[9, 0, 0, 2, 0, 12, \ldots]; \nonumber \\
g=6:\; \phantom{h(t)}& & & \nonumber \\
h(t) &= t(t+2)(t^4+5t^3+5t^2-5t-5), &a(X)&=[10,0,0,0,3,10,\dots], \label{rWa} 
\\
h(t) &= (t-1)(t+2)(t^2+3t+1)^2, &a(X)&=[10,0,0,0,2,15,\dots]. \label{rWb}
\end{align}

\end{theorem}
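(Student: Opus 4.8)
The plan is to reduce the determination of $h(t)$ to a finite search governed by three kinds of constraint: the location of the roots, the non-negativity of the place-counts $a_d$, and the resultant obstructions of Propositions \ref{Res1}--\ref{EllFact}.

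First, since $X$ is optimal, $\#X(\IF_2)=a_1=N_2(g)$ is read off from Serre's table above. Writing $h(t)=\prod_{i=1}^g(t-\mu_i)=t^g-\big(\sum_i\mu_i\big)t^{g-1}+\cdots$ and using $\#X(\IF_2)=q+1-\sum_i\mu_i=3-\sum_i\mu_i$, this fixes the subleading coefficient, so that the trace $\sum_i\mu_i=3-N_2(g)$ is pinned down for each $g$.

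Next I would make the list of candidate polynomials finite and then cut it down. Because every root satisfies $\mu_i\in[-2\sqrt2,2\sqrt2]$, all elementary symmetric functions of the $\mu_i$, hence all coefficients of $h$, lie in explicit bounded ranges; as $h\in\mathbb{Z}[t]$ this already leaves finitely many candidates. To prune them I would use that $\#X(\IF_{2^n})=2^n+1-\sum_i(\alpha_i^n+\overline{\alpha_i}^n)$, where the power sums $\sum_i(\alpha_i^n+\overline{\alpha_i}^n)$ are computed from the $\mu_i$ by Newton's identities together with $\alpha_i\overline{\alpha_i}=2$, and that Möbius inversion recovers the $a_d$ from the counts $\#X(\IF_{2^e})$ with $e\mid d$. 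Imposing $a_d\ge0$ is then a purely arithmetic test. Already $a_2\ge0$ forces $\sum_i\mu_i^2\le 4g+5-N_2(g)$, which, the trace being fixed, confines the $\mu_i$ to a small compact region; imposing $a_2,a_3,\dots\ge0$ in turn should reduce the candidates to a short list for each $g$.

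Finally, for the survivors I would invoke the non-existence criteria. Serre's Proposition \ref{Res1} discards any $h=h_1h_2$ with $\mathrm{Res}(h_1,h_2)=\pm1$, while the Howe--Lauter Propositions \ref{Res2} and \ref{EllFact} treat the factorizations of reduced resultant $2$ (or of small resultant against an elliptic factor) by forcing a degree-$2$ or degree-$r$ cover $C\to C'$ onto a curve with a prescribed smaller real Weil polynomial; whenever the required $C'$ cannot exist, the candidate $h$ is eliminated. Once a single $h$ (two polynomials for $g=6$) remains, the vector $a(X)$ is read off from the point counts. I expect the positivity bookkeeping to dispose of $g\le4$ directly, and the real difficulty to lie in $g=5$ and $g=6$, where several polynomials pass every $a_d\ge0$ test and separating the genuine real Weil polynomial(s) from the near-misses requires the delicate resultant and reduced-resultant obstructions; $g=6$ is precisely the case in which two inequivalent polynomials survive all of these tests.
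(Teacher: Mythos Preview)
Your overall strategy matches the paper's: fix $a_1$ from Serre's table, enumerate monic integer polynomials with all roots in $[-2\sqrt2,2\sqrt2]$, prune by $a_d\ge0$, and then apply the resultant criteria. The paper reports that after imposing the root/positivity constraints together with Proposition~\ref{Res1} alone, a computer search already yields a unique polynomial for each $g\le5$ and exactly three for $g=6$; so contrary to your expectation, $g=5$ does not need the Howe--Lauter refinements, only Serre's resultant~$\pm1$ criterion.

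There is, however, a genuine gap in your description of how the superfluous $g=6$ candidate is killed. You write that a candidate is discarded ``whenever the required $C'$ cannot exist,'' but for the third polynomial
\[
h(t)=(t+1)(t+2)(t^2+2t-2)(t^2+2t-1),\qquad a(X)=[10,0,0,1,0,12,\dots],
\]
the target $C'$ \emph{does} exist: it is the optimal elliptic curve $E$ with real Weil polynomial $t+2$. The resultant of $t+2$ against the cofactor is $-2$, so Proposition~\ref{EllFact} only produces a degree-$2$ map $X\to E$; it does not by itself rule anything out. The actual obstruction is a ramification count on that cover. The curve $E$ has five places of degree $4$ while $X$ would have only one, and $E$ has no degree-$2$ places, so some degree-$4$ place $Q$ of $E$ must ramify in $X$; wild ramification forces $2Q\le D$, and Hurwitz gives $\deg D=10$, hence $D=2Q+2R$ with $R\in E(\IF_2)$. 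But all five rational points of $E$ must split completely since $\#X(\IF_2)=10$, a contradiction. In short, the elimination is not ``$C'$ fails to exist'' but ``the forced cover $X\to C'$ is incompatible with the two place vectors via Hurwitz.'' Without this ramification argument your plan leaves three $g=6$ polynomials standing, not two.
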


\begin{proof}
Following \cite[page Se Th 38]{serre} we compute for each $g\le 6$ a finite list of monic degree $g$ polynomials $h(t)\in \mathbb{Z}[t]$ for which $a_{1}$ is equal to the number of $\IF_{2}$-rational points of an optimal genus $g$ curve and for which $a_{d}\ge 0$ for $d\ge 2$ in the relation $L(t)=t^{g}h(qt+1/t)$. Moreover we require that $h(t)$ has the property that its zeros are in the interval $[-2\sqrt{2},2\sqrt{2}]$. Finally, we require that the conditions of Proposition \ref{Res1} are satisfied. A short computer calculation gives a unique polynomial for $g\leq 5$ and three polynomials for $g=6$:
\begin{align}
&(1)& h(t)&=t(t+2)(t^4+5t^3+5t^2-5t-5), &a(X)&=[10,0,0,0,3,10,\dots]; \nonumber \\
&(2)& h(t)&=(t-1)(t+2)(t^2+3t+1)^2, &a(X)&=[10,0,0,0,2,15,\dots]; \nonumber \\
&(3)& h(t)&=(t+1)(t+2)(t^2+2t-2)(t^2+2t-1), &a(X)&=[10,0,0,1,0,12,\dots]. \nonumber
\end{align}

We show that the third polynomial cannot occur. The resultant of the factors $t+2$ and $(t+1)(t^2+2t-2)(t^2+2t-1)$ is $-2$. Hence, by Proposition \ref{EllFact}, a genus $g=6$ curve $X$, having this polynomial as real Weil polynomial, admits a degree $2$ map $X \to E$, where $E$ is a genus one curve having real Weil polynomial $t+2$. The curve $E$ has parameters $a(E)=[5, 0, 0, 5, 4, 10, \ldots]$, hence $E$ has five places of degree $4$ while $X$ has only one. Since $E$ does not have any degree $2$ places, this means that one place $Q$ of the degree $4$ places of $E$ must ramify in $X$. The different $D$ of the quadratic function field extension $\IF_{2}(X)/\IF_{2}(E)$ satisfies $2Q \le D$ (where the coefficient $2$ is forced by wild ramification). On the other hand the degree of the different is $2g-2=10=\mathrm{deg}\,D$ by the Hurwitz formula. Thus $D=2Q+2R$, where $R$ is a rational point of $E$. But this is a contradiction because all of five rational points of $E$ split completely in $X$ since $\#X(\IF_{2})=10$.
\end{proof}

\section{Uniqueness of optimal elliptic curves}\label{sec: unique01}

In this section we prove Theorem \ref{uniquecurve} for curves of genus $0$ and $1$.

\begin{remark}
We denote by $\IP^{1}$ the projective line over $\IF_{2}$ and by $0$, $1$ and $\infty$ its three rational points. Over a finite field, every genus $0$ curve is isomorphic to $\IP^{1}$. Therefore $\IP^{1}$ is optimal. The Zeta function of $\IP^{1}$ is
\[
Z(t)=\frac{1}{(1-2t)(1-t)} \qquad \mathrm{and \;hence} \qquad a(\IP^{1})=[3,1,2,3,6,\ldots].
\]
\end{remark}

\begin{proposition}
Up to $\IF_{2}$-isomorphism, the unique genus $1$ curve having five rational points over $\IF_{2}$ is the elliptic curve $E$ of affine equation $y^{2}+y=x^{3}+x$.
\end{proposition}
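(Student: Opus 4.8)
The plan is to use the fact that a genus one curve carrying a rational point is an elliptic curve, and to exploit that the extremal point count $5$ forces a very special reduction type. First I would observe that since the curve $X$ has at least one $\IF_{2}$-rational point, it is an elliptic curve $E/\IF_{2}$ and hence admits a generalized Weierstrass equation. The hypothesis $\#E(\IF_{2})=5$ together with $\#E(\IF_{2})=q+1-a=3-a$ forces the trace of Frobenius to be $a=-2$; this is exactly the case $h(t)=t+2$ of Theorem \ref{RealWeil}.

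Next I would exploit the reduction type. Because $a=-2$ is divisible by the characteristic $p=2$, the curve $E$ is supersingular, so $j(E)=0$. In characteristic $2$ the $j$-invariant of a model $y^{2}+a_{1}xy+a_{3}y=x^{3}+a_{2}x^{2}+a_{4}x+a_{6}$ equals $a_{1}^{12}/\Delta$, so $j(E)=0$ is equivalent to $a_{1}=0$. Smoothness then forces $a_{3}\neq 0$, i.e.\ $a_{3}=1$ (otherwise the partial derivative in $y$ vanishes identically and the curve is singular), and the translation $x\mapsto x+a_{2}$ removes the quadratic term. Hence every genus one curve over $\IF_{2}$ with trace $-2$ is $\IF_{2}$-isomorphic to one of the four curves $y^{2}+y=x^{3}+a_{4}x+a_{6}$ with $a_{4},a_{6}\in\IF_{2}$.

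Finally I would simply count points on these four candidates. Evaluating over $x\in\{0,1\}$ and adding the point at infinity gives $\#E(\IF_{2})\in\{3,3,1,5\}$ for $(a_{4},a_{6})=(0,0),(0,1),(1,1),(1,0)$ respectively; in particular only $y^{2}+y=x^{3}+x$ attains five points. Since any genus one curve with five rational points reduces to this normal form, the optimal curve is unique up to $\IF_{2}$-isomorphism.

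The computation itself is routine; the only point requiring care is the isomorphism bookkeeping, which is where I expect the genuine (if modest) difficulty to lie. Concretely, one must check that the changes of variables used to normalize the equation are admissible over $\IF_{2}$ (over $\IF_{2}$ the only unit is $u=1$, so the transformations are $x\mapsto x+r$, $y\mapsto y+sx+t$ with $r,s,t\in\IF_{2}$) and that the reduction to the form $y^{2}+y=x^{3}+a_{4}x+a_{6}$ is genuinely exhaustive, so that no five-point curve is overlooked. Alternatively, and with no appeal to supersingularity, one can enumerate all $2^{5}$ Weierstrass equations over $\IF_{2}$ directly, discard the singular ones, and group the remainder into the five $\IF_{2}$-isomorphism classes; these turn out to have point counts $1,2,3,4,5$, again isolating $y^{2}+y=x^{3}+x$ as the unique curve with five rational points.
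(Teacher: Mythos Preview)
Your argument is correct. The supersingularity step is sound: the trace $a=-2$ is divisible by $p=2$, hence $E$ is supersingular, and in characteristic $2$ this forces $j(E)=0$; the formula $j=a_{1}^{12}/\Delta$ (equivalently $c_{4}=a_{1}^{4}$ in characteristic $2$) then gives $a_{1}=0$, smoothness gives $a_{3}=1$, and the translation $x\mapsto x+a_{2}$ kills the quadratic term. The final point count is accurate.

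The paper takes a different route. Rather than invoking supersingularity and the $j$-invariant, it uses the degree-$2$ map $E\to\IP^{1}$ directly: writing $E$ as $y^{2}+a(x)y=f(x)$ with $\deg a\le 1$ and $\deg f=3$, the requirement that $\infty$ ramify forces $a(x)=1$, and the requirement that the two affine points $0,1\in\IP^{1}$ split forces $f(0)=f(1)=0$, leaving $f(x)=x(x+1)(x+a)$ with $a\in\IF_{2}$; the two resulting equations are then matched by $(x,y)\mapsto(x+1,y)$. Your approach imports slightly more theory (the supersingular criterion and the characteristic-$2$ $j$-invariant formula) but reaches the normal form more systematically and avoids having to identify two models at the end. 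The paper's approach, by contrast, is a miniature of the ramification-and-conductor analysis that drives the higher-genus cases, so it fits the surrounding narrative better. Your alternative of brute-force enumerating the $2^{5}$ Weierstrass models is of course the most elementary of all and is also perfectly valid here.
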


\begin{proof}
A genus $1$ curve $E$ over $\IF_{2}$ having five rational points over $\IF_{2}$ is an elliptic curve. Hence $E$ admits a separable degree $2$ morphism to $\IP^{1}$. It can be described as a smooth cubic in $\IP^{2}$ of affine equation of the form $y^2+a(x)y=f(x)$, where $a(x)$ and $f(x)$ are polynomials in $\IF_{2}[x]$, the first of degree $0$ or $1$ and the latter of degree $3$ \cite[Appendix A]{silverman}. Since the point at infinity $\infty$ of $\IP^{1}$ ramifies in $E$, one has $a(x)=1$. The affine points $0$ and $1$ of $\IP^{1}$ have to split, thus we have that $f(0)=f(1)=0$ and hence $f(x)=x(x+1)(x+a)$, where $a \in  \IF_{2}$. If $a=1$ we find the equation $y^2+y=x^3+x$ and if $a=0$ the equation $y^2+y=x^3+x^2$. These two curves are indeed isomorphic over $\IF_{2}$ by changing coordinates through the map $(x,y) \mapsto (x+1,y)$.
\end{proof}

\begin{remark}
The function field of the genus $1$ curve $E$ can also be described as the ray class field of $\IP^{1}$ of conductor $4$ times a rational point, in which the other two rational points of $\IP^{1}$ are both split. Since $\mathrm{Aut}(\IP^{1})$ acts doubly transitively on $\{0,1,\infty\}$, different choices give rise to isomorphic ray class fields.
\end{remark}

\begin{remark}\label{remg1N5}
We often refer to this unique optimal elliptic curve $E$ throughout this paper. For future reference, we present here some properties of $E$. In terms of the affine equation $y^2 +y =x^3 +x$, we denote the five rational points of $E$ as follows: we write $P_{0}$ for the point at infinity and we put 
\begin{equation}\label{Epoints}
P_{1}=(0,0), \quad P_{2}=(0,1), \quad P_{3}=(1,0), \quad P_{4}=(1,1).
\end{equation}
The real Weil polynomial of $E$ is $h(t)=t+2$. The vector $a(E)$ of the numbers $a_{d}$ of places of degree $d=1,2,\ldots$ of $E$ is given by
\[
a(E)=[5, 0, 0, 5, 4, 10, 20,\ldots].
\]
Let $a\in \IF_{16}$ be a root of $x^{4}+x+1$. Then, the five places of degree $4$ of $E$ have coordinates 
\begin{align*}
Q_1=(a^{3}&,a^{3}+a),  \, Q_2=(a^{3},a^{3}+a+1), \\
Q_3=(a^{3}+1,a), \, Q_4&=(a^{3}+1, a+1), \, Q_5=(a^{2}+a+1,a). \nonumber
\end{align*}
Let $b\in \IF_{32}$ be a root of $x^{5}+x^{3}+1$, then the four places of degree $5$ of $E$ consist of the points of coordinates:
\begin{equation*}
R_{1}=(b,b^{4}), \, R_{2}=(b,b^{4}+1),\,
R_{3}=(b+1,b^{4}+b), \, R_{4}=(b+1,b^{4}+b+1).
\end{equation*}
Let $c \in \IF_{64}$ be a root of $x^{6}+x^{5}+1=0$. The places of degree $6$ of $E$ have coordinates
\begin{align*}
T_{1}=(c^5+c^3+c^2+c+1,c^5 + c^4 + c^3 + 1 )&, \, T_{2}=(c^5+c^3+c^2+c, c^4+c^2+c), \nonumber \\
T_{3}=(c^3+c^2+1, c^3+c^2+c), \,T_{4}&=(c^3+c^2+1,  c^3+c^2+c+1), \nonumber  \\ 
T_{5}=(c+1, c^4+c^3+c^2+c), \, T_{6}&=(c+1, c^4+c^3+c^2+c+1), \\
T_{7}=(c^3+c^2, c+1 )&,\, T_{8}=(c^3+c^2, c ), \nonumber \\
T_{9}=(c, c^4+c^3+c^2), \, T_{10}&=(c, c^4+c^3+c^2+1).\nonumber 
\end{align*}
The order $5$ automorphism $\sigma$ of $E$ given by addition of $P_{1}$ acts transitively on $E(\IF_{2})$ as follows: $P_{0}\mapsto P_{1}\mapsto P_{3}\mapsto P_{4}\mapsto P_{2}\mapsto P_{0}$. The action of $\sigma$ on the places of degree $4$ is as follows: $Q_{1}\mapsto Q_{5}\mapsto Q_{2}\mapsto Q_{4}\mapsto Q_{3}\mapsto Q_{1}$.
On the other hand, the order $4$ automorphism of $E$
\[
\tau:\, (x,y) \mapsto (x+1,y+x+1)
\]
fixes $P_0$ and acts transitively on the remaining four rational points: $P_{1}\mapsto P_{4}\mapsto P_{2}\mapsto P_{3}\mapsto P_{1}$. Similarly, $\tau$ fixes $Q_{5}$ and acts transitively on the remaining degree $4$ places: $Q_{1}\mapsto Q_{4}\mapsto Q_{2}\mapsto Q_{3}\mapsto Q_{1}$. The action of $\tau$ on the places of degree $5$ is transitive: $R_{1}\mapsto R_{4}\mapsto R_{2}\mapsto R_{3}\mapsto R_{1}$.
\end{remark}

\section{Uniqueness of genus $2$ optimal curves}\label{sec: unique2}

\begin{proof}[Proof of Theorem \ref{uniquecurve} for $g=2$]
A genus $2$ optimal curve $X$ over $\IF_2$ is hyperelliptic. Since $X$ has six rational points, all three rational points of $\IP^1$ split completely in the double covering $X \to \IP^1$. By Theorem \ref{RealWeil}, the curve $X$ has no places of degree $2$ and only one place of degree $3$. Thus only one degree $3$ place $Q$ of the two degree $3$ places of $\IP^1$ totally ramifies in $X$. The different $D$ of the corresponding function field extension is hence $2Q$, since $2Q\leq D$ and $\textrm{deg}\,D=6$ by the Hurwitz formula. Any genus $2$ curve having six rational points over $\IF_2$ is hence a double covering of $\IP^1$ of conductor $2Q$, where $Q$ is a place of $\IP^1$ of degree $3$, in which all rational points of $\IP^1$ are split. A different choice of the degree $3$ place of $\IP^1$ leads to an $\IF_2$-isomorphic curve. Indeed, the $\IF_2$-isomorphism $x \mapsto 1/x$ preserves the rational points of $\IP^{1}$, but switches the two degree $3$ places.
\end{proof}

\section{Uniqueness of genus $3$ optimal curves}

We briefly recall some important results on the Jacobian variety of a curve in order to state and prove a useful lemma.\\

Let $X$ be a curve defined over $\IF_q$. We denote by $\mathcal{J}ac(X)$ the Jacobian variety of $X$ and by $T_\ell$ the Tate module attached to $\mathcal{J}ac(X)$, where $\ell$ is a prime number different from the characteristic of $\IF_q$. We set $V_{\ell}=T_{\ell} \otimes \mathbb{Q}_{\ell}$. Let $F: V_\ell \to V_\ell$ be the Frobenius map and let $V: V_\ell \to V_\ell$ be the Verschiebung map: the unique map such that $V\circ F=q$. Then $\mathbb{Z}[F,V]\subseteq \mathrm{End}(\mathcal{J}ac(X))$. Next we let $\phi$ be the canonical polarization on $\mathcal{J}ac(X)$. Then $\phi$ can be represented as a non-degenerate alternating form $\phi: V_\ell \times V_\ell \to \mathbb{Q}_\ell$. Here $\mathbb{Q}_\ell$ denotes the field of $\ell$-adic numbers. Since $\phi(F(x),F(y)) = q\phi(x,y)$ for every $x, y \in V_\ell$, by bilinearity of $\phi$ we have that $\phi(F(x),F(y)) = q\phi(x,y)=\phi(qx,y)=\phi(V(F(x)),y)$. It follows that $\phi(z,F(y))=\phi(V(z),y)$ for any $y, z \in V_\ell$. In other words $V$ is left adjoint to $F$ \mbox{with respect to $\phi$.}

\begin{theorem} [Torelli Theorem \cite{weil}]
Let $X$ and $X'$ be two curves over a perfect field $k$. Let \mbox{$\tau: \mathcal{J}ac(X) \to  \mathcal{J}ac(X')$} be an isomorphism over $k$ compatible with the canonical polarizations. Then
\begin{enumerate}
\item if $X$ is hyperelliptic, there exists a unique isomorphism$f:\, X \to X'$ over $k$ which gives $\tau$;
\item if $X$ is not hyperelliptic, there exists a unique isomorphism \mbox{$f: \, X \to X'$} over $k$ and a unique $\varepsilon \in \{ \pm 1\} $ such that $f$ gives $\varepsilon \tau$. 
\end{enumerate}
\end{theorem}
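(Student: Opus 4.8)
The plan is to reduce to an algebraically closed field by Galois descent and, over $\bar{k}$, to reconstruct the curve $X$ canonically from the polarized Jacobian $(\mathcal{J}ac(X),\phi)$. An isomorphism $f\colon X\to X'$ induces a homomorphism $\mathcal{J}ac(f)\colon\mathcal{J}ac(X)\to\mathcal{J}ac(X')$ which is independent of any base point and respects the canonical polarizations, so the statement splits into an existence claim (every polarized $\tau$ equals $\varepsilon\,\mathcal{J}ac(f)$ for some isomorphism $f$ and some sign $\varepsilon$) and a uniqueness claim. I would dispatch uniqueness first: if $f_{1},f_{2}$ both give $\tau$, then $\alpha=f_{2}^{-1}f_{1}$ is an automorphism of $X$ inducing the identity on $\mathcal{J}ac(X)$, hence acting trivially on the cotangent space $H^{0}(X,\omega_{X})$; since for $g\ge 2$ the natural representation of $\mathrm{Aut}(X)$ on $H^{0}(X,\omega_{X})$ is faithful, we get $\alpha=\mathrm{id}$ and $f_{1}=f_{2}$, the cases $g\le 1$ being immediate.

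For existence, fix a base point and let $W^{r}\subset\mathcal{J}ac(X)$ be the image of the $r$-fold Abel--Jacobi map, so that $W^{1}$ is the embedded image of $X$ and $W^{g-1}$ is a translate of the theta divisor $\Theta$ determined by $\phi$; after translating I may take $\Theta$ symmetric. The heart of the matter is to recover $W^{1}$ from $\Theta$. Over $\bar{k}$ I would use the Gauss map $\gamma\colon\Theta^{\mathrm{sm}}\to\mathbb{P}(T_{0}\mathcal{J}ac(X)^{\vee})$ sending a smooth point to the direction of its tangent hyperplane. Under the canonical identification $T_{0}\mathcal{J}ac(X)^{\vee}\cong H^{0}(X,\omega_{X})$ this target is the dual of the canonical $\mathbb{P}^{g-1}$, and the branch divisor of $\gamma$ is exactly the dual variety of the canonical image of $X$. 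If $X$ is non-hyperelliptic the canonical map is a closed embedding, so biduality recovers the canonical curve and hence $X$; if $X$ is hyperelliptic the canonical map is a two-to-one cover of a rational normal curve $\Gamma\subset\mathbb{P}^{g-1}$, and one instead recovers $\Gamma$ together with its branch locus, which again determines $X$. In both cases $X$ is reconstructed from $(\mathcal{J}ac(X),\phi)$ up to translation and up to the involution $-1$.

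This last ambiguity is exactly the source of the hyperelliptic/non-hyperelliptic dichotomy. For a hyperelliptic curve the hyperelliptic involution $\iota$ satisfies $\mathcal{J}ac(\iota)=-1$, so replacing $f$ by $f\circ\iota$ interchanges the roles of $\tau$ and $-\tau$; hence for the \emph{given} $\tau$ there is a genuinely unique $f$. For a non-hyperelliptic curve no automorphism induces $-1$, so $\tau$ and $-\tau$ are distinct data and exactly one of them has the form $\mathcal{J}ac(f)$, producing the unique pair $(f,\varepsilon)$. Finally, since the whole reconstruction is functorial, feeding in a $\mathrm{Gal}(\bar{k}/k)$-equivariant $\tau$ yields a $\mathrm{Gal}(\bar{k}/k)$-equivariant $f$, which therefore descends to an isomorphism over $k$; here perfectness of $k$ is what makes the descent clean.

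I expect the reconstruction step to be the main obstacle. Identifying the branch divisor of the Gauss map with the dual of the canonical curve, and undoing it by biduality, is transparent in characteristic zero but delicate in positive characteristic, where the Gauss map may be inseparable and dual varieties behave pathologically; since the paper works over a perfect field of characteristic $2$, the differential-geometric picture can only serve as guiding intuition. For an honest proof in all characteristics I would instead follow Weil's original correspondence-theoretic reconstruction, or a characteristic-free intersection-theoretic analysis of the loci $W^{r}$ and their translates, and use the Gauss map only to motivate why $(\mathcal{J}ac(X),\phi)$ should determine $X$.
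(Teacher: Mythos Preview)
The paper does not prove this statement at all: the Torelli Theorem is quoted as a known result and simply attributed to Weil \cite{weil}, with no argument given. It is used only through its immediate corollary (that any polarization-preserving automorphism of $\mathcal{J}ac(X)$ comes, up to sign, from an automorphism of $X$), which is then applied in Lemma~\ref{lemmam7} to produce an order-$7$ automorphism of the genus~$3$ optimal curve. So there is nothing to compare your proposal against.

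That said, your outline is a fair summary of the classical strategy. A few remarks. Your uniqueness argument needs a small patch: for $g\ge 2$ the action of $\mathrm{Aut}(X)$ on $H^{0}(X,\omega_{X})$ is faithful only when $X$ is non-hyperelliptic; in the hyperelliptic case the hyperelliptic involution $\iota$ acts as $-1$ on differentials, not as the identity, so the kernel of the representation is $\{1,\iota\}$. This is harmless for your purposes because $\iota$ induces $-1$ on the Jacobian, not the identity, so an automorphism inducing the identity on $\mathcal{J}ac(X)$ is still forced to be trivial---but the reason is not quite the one you gave. You are right to flag the Gauss-map/biduality step as the weak point in positive characteristic; since the paper lives over $\IF_{2}$, your instinct to fall back on Weil's correspondence-theoretic argument (or Matsusaka's, or Serre's appendix to Lauter) is the correct one, and indeed is exactly what the citation \cite{weil} points to.
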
 

\begin{corollary}
If $\tau$ is an automorphism of $\mathcal{J}ac(X)$ over $k$ preserving the po\-la\-ri\-za\-tion, then either $\tau$ or $-\tau$ comes from an automorphism over $k$ of $X$.
\end{corollary}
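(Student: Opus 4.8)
The plan is to derive this statement immediately from the Torelli Theorem by specializing to the case $X' = X$. First I would observe that an automorphism $\tau$ of $\mathcal{J}ac(X)$ over $k$ that preserves the canonical polarization is nothing other than an isomorphism $\tau: \mathcal{J}ac(X) \to \mathcal{J}ac(X)$ over $k$ compatible with the canonical polarizations. Thus the hypotheses of the Torelli Theorem are satisfied with $X'$ taken equal to $X$, and I can feed $\tau$ directly into whichever of the two cases applies to $X$.

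Then I would split according to the dichotomy in the theorem. If $X$ is hyperelliptic, part (1) produces a unique isomorphism $f: X \to X$ over $k$ which gives $\tau$; since this $f$ is an automorphism of $X$, the conclusion holds, and in fact $\tau$ itself (not merely $-\tau$) comes from $X$. If $X$ is not hyperelliptic, part (2) produces an automorphism $f: X \to X$ over $k$ together with a unique sign $\varepsilon \in \{\pm 1\}$ such that $f$ gives $\varepsilon\tau$. Reading off the two possibilities, when $\varepsilon = 1$ the automorphism $f$ gives $\tau$, and when $\varepsilon = -1$ it gives $-\tau$. In either case one of $\tau$ or $-\tau$ arises from an automorphism of $X$, which is exactly the assertion.

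I do not expect a genuine obstacle here, since the corollary is a formal specialization of the theorem. The only points that warrant a moment of care are the interpretation of the phrase ``$f$ gives $\tau$'' -- namely that the automorphism of $\mathcal{J}ac(X)$ induced functorially by $f$ equals $\tau$ -- and the bookkeeping of the sign $\varepsilon$, which lets me phrase the conclusion symmetrically as ``$\tau$ or $-\tau$.'' Both are routine once the statement of the Torelli Theorem is taken as given.
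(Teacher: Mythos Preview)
Your proposal is correct and matches the paper's approach: the corollary is stated there without proof, as an immediate specialization of the Torelli Theorem to $X'=X$, exactly as you argue. Your case split and the handling of the sign $\varepsilon$ are the intended reading.
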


\begin{lemma} \label{lemmam7} 
Any genus $3$ curve $X$ having exactly seven rational points over $\IF_{2}$ admits an automorphism of order $7$.
\end{lemma}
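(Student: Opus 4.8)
The plan is to build an order $7$ automorphism on the Jacobian by exhibiting a primitive $7$th root of unity in $\mathrm{End}(\mathcal{J}ac(X))$, and then to transport it to $X$ via the Torelli corollary. By Theorem \ref{RealWeil} the real Weil polynomial of $X$ is $h(t)=t^3+4t^2+3t-1$. Its discriminant is $49$, so $h$ is irreducible and $K_0:=\mathbb{Q}[t]/(h)$ is the cyclic cubic field of conductor $7$, that is, the maximal real subfield $\mathbb{Q}(\zeta_7)^{+}$. First I would work inside $\mathrm{End}(\mathcal{J}ac(X))$, which contains $\mathbb{Z}[F,V]$ with $FV=2$ and $h(F+V)=0$. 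Writing $\mu=F+V$, the Frobenius $F$ satisfies $F^2-\mu F+2=0$ over $K_0=\mathbb{Q}(\mu)$, so $\mathbb{Q}[F,V]=K_0(\sqrt{\mu^2-8})$ is a CM field $K$ of degree $6$; it is genuinely of degree $6$ because $\mu^2-8$ is not a square in $K_0$, its norm being $N_{K_0/\mathbb{Q}}(\mu^2-8)=h(\sqrt 8)\,h(-\sqrt8)=31^2-121\cdot 8=-7$, not a square in $\mathbb{Q}$.

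The technical heart is to identify $K$ with $\mathbb{Q}(\zeta_7)$ and to show that $\mathrm{End}(\mathcal{J}ac(X))$ is the \emph{maximal} order, so that $\zeta_7$ is an actual endomorphism rather than merely an element of $\mathrm{End}^0$. Since $\mathrm{disc}(h)=49=\mathrm{disc}(K_0)$, one already has $\mathbb{Z}[\mu]=\mathcal{O}_{K_0}$, whence $\mathbb{Z}[F,V]=\mathcal{O}_{K_0}[F]$. The relative discriminant of $x^2-\mu x+2$ is the ideal $(\mu^2-8)$, of norm $7$; on the other hand, from $\mathrm{disc}(\mathbb{Q}(\zeta_7))=-7^5$ and $\mathrm{disc}(K_0)=49$ the relative discriminant of $\mathbb{Q}(\zeta_7)/K_0$ also has norm $7$. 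Matching these forces $\mathbb{Z}[F,V]=\mathcal{O}_K$ with $\mathrm{disc}(K)=49^2\cdot 7=7^5$, so $K$ is the sextic field of discriminant $-7^5$, namely $\mathbb{Q}(\zeta_7)$, and $\zeta_7\in\mathbb{Z}[F,V]\subseteq\mathrm{End}(\mathcal{J}ac(X))$. I expect this discriminant bookkeeping, together with the clean identification of the CM field, to be the only delicate point; note in particular that $\mathbb{Z}[F]$ alone is \emph{not} maximal (one checks $V\notin\mathbb{Z}[F]$), so it is essential to work with $\mathbb{Z}[F,V]$.

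With $\zeta_7$ in hand as an automorphism of $\mathcal{J}ac(X)$, I would verify that it respects the polarization. By the adjointness $\phi(z,F(y))=\phi(V(z),y)$ recalled before the lemma, the Rosati involution sends $F$ to $V=\bar F$, hence acts on $K=\mathbb{Q}(\zeta_7)$ as complex conjugation; therefore $\zeta_7^{\dagger}\zeta_7=\zeta_7^{-1}\zeta_7=1$. Finally I would invoke the corollary to the Torelli theorem: either $\zeta_7$ or $-\zeta_7$ is induced by an automorphism $\phi$ of $X$. In the first case $\phi$ has order $7$; in the second, $-\zeta_7$ has order $14$ in $\mathbb{Q}(\zeta_7)^{\times}$, so $\phi$ has order $14$ and $\phi^2$ has order $7$. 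In either case $X$ carries an automorphism of order $7$, as claimed.

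For context, $X$ is in fact non-hyperelliptic: a degree $2$ map $X\to\IP^1$ would send every rational point of $X$ to one of the three rational points of $\IP^1$, giving at most $2\cdot 3=6<7$ rational points. This observation is not needed for the argument, however, since the Torelli corollary covers both the hyperelliptic and the non-hyperelliptic case uniformly.
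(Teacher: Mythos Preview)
Your proof is correct and follows essentially the same route as the paper: both compute the discriminant of $\mathbb{Z}[F,V]$ to be $-7^{5}$, identify $\mathbb{Q}(F,V)$ with $\mathbb{Q}(\zeta_{7})$ (the paper phrases this as a class-field-theoretic uniqueness of the quadratic extension of $\mathbb{Q}(\zeta_{7})^{+}$, you as uniqueness of the sextic field of discriminant $-7^{5}$), check that $\zeta_{7}$ respects the polarization via the Rosati involution, and then descend to $X$ by the Torelli corollary. The extra remark on non-hyperellipticity is a nice observation but, as you say, not needed.
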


\begin{proof}
We show that for a genus $3$ curve $X$ having seven rational points over $\IF_2$ the ring $\mathbb{Z}[F,V]\subseteq \mathrm{End}(\mathcal{J}ac(X))$ is isomorphic to $\mathbb{Z}[\zeta_7]$, the ring of integers of $\mathbb{Q}(\zeta_7)$. The minimal polynomial of $F+V$ is the real Weil polyomial of $X$. By Theorem \ref{RealWeil} this is $h(t)=t^3 + 4t^2 + 3t - 1$. It is an irreducible polynomial of discriminant $7^{2}$. Hence, for a root $\alpha \in \overline{\mathbb{Q}}$ of $h(t)$, the number field $\mathbb{Q}(\alpha)$ is a cyclic extension of degree $3$ of $\mathbb{Q}$, which is ramified only at $7$. By the Kronecker-Weber Theorem the field $\mathbb{Q}(\alpha)$ is hence the unique degree $3$ subfield $\mathbb{Q}(\zeta_{7}+\zeta_{7}^{-1})$ of $\mathbb{Q}(\zeta_{7})$ and $\mathbb{Z}[\alpha]$ is its ring of integers. Consider now the minimal polynomial of Frobenius $x^{2}-\alpha x+2 \in \mathbb{Z}[\alpha][x]$. Its discriminant $\alpha^{2}-8$ has norm $7$ and hence generates a prime ideal $\pi \subseteq \mathbb{Z}[\alpha]$ lying over the prime $7$ of $\mathbb{Z}$. By class field theory $\mathbb{Q}(\alpha)$ admits a unique quadratic extension unramified outside of $\pi$ and the three infinite primes lying over $7$. This is the field $\mathbb{Q}(\zeta_{7})$, which has discriminant $7^{5}$ by the conductor-discriminant formula. The discriminant of $\mathbb{Q}(\alpha,x)$ can be computed to be $7^{5}$ as well by means of the relative discriminant formula for towers of number fields. Hence $\mathbb{Z}[F,V]=\mathbb{Z}[\alpha,x]$ is the ring of integers $\mathbb{Z}[\zeta_{7}]$ of $\mathbb{Q}(\zeta_{7})$ as wanted.\\ 
Now $\mathcal{J}ac(X)$ has in particular an automorphism $\tau$ of order $7$ corresponding to $\zeta_7$. We show that $\tau$ preserves the polarization $\phi$. By bilinearity of $\phi$ and since $V$ is the complex conjugate of $F$, the left adjoint to an element $\tau \in \mathbb{Z}[F,V]$ is its complex conjugate $\overline{\tau}$. Since $\tau$ satisfies $\tau \overline{\tau}=1$, we have in particular that $\phi(\tau (x), y) = \phi(x,\overline{\tau}(y)) = \phi(x, {\tau}^{\scriptscriptstyle{-1}}(y))$ for any $x$, $y$ in $V_\ell$. This implies that $\phi(\tau(x), \tau(y)) = \phi(x, y)$ for any $x, y \in V_\ell$. In other words $\tau$ preserves the polarization $\phi$ of $\mathcal{J}ac(X)$. By the above Corollary of Torelli's Theorem the curve $X$ admits hence an automorphism $f$ of order $7$. Indeed if $f$ does not induce $\tau$ of order $7$, but $f$ induces $-\tau$, then $f^{2}$ is an automorphism of order $7$ of $X$.
\end{proof}

\begin{proof}[Proof of Theorem \ref{uniquecurve} for $g=3$]
By Lemma \ref{lemmam7} the curve $X$ admits an automorphism $f$ of order $7$. Then, by Galois correspondence, $X$ is a cyclic covering of degree $7$ of a curve which can only be $\IP^1$ by comparing the genera and the degree of the different in the Hurwitz formula. By the conductor-discriminant formula, the conductor $D$ of such a covering satisfies $6\,\textrm{deg}D=18$. Since there are seven rational points on $X$, only one rational point $P$ of $\IP^1$ splits completely. Thus one has $D=Q$, where $Q$ is a place of $\IP^1$ of degree $3$. Hence $X$ is a cyclic degree $7$ covering of $\IP^1$ of conductor $Q$, where one rational point $P$ of $\IP^1$ splits completely. Different choices of $P$ in $\{0,1,\infty\}$ and of the degree $3$ place $Q$ give rise to $\IF_2$-isomorphic curves. Indeed, since the automorphisms group of $\IP^1$ acts transitively on the rational points, we can always first reduce to the case $P=\infty$. Next the automorphism $x \mapsto x+1$ fixes $P$ and maps one degree $3$ place of $\IP^{1}$ into the other one.
\end{proof}

\section{Uniqueness of genus $4$ optimal curves}

\begin{proof}[Proof of Theorem \ref{uniquecurve} for $g=4$]
By Theorem \ref{RealWeil} the real Weil polynomial of an optimal genus $4$ curve $X$ over $\IF_2$ is $h(t)=(t + 1)(t + 2)(t^2 + 2t - 2)$. The resultant of the polynomials $t+2$ and $(t + 1)(t^2 + 2t - 2)$ is $2$. Proposition \ref{EllFact} implies therefore that the curve $X$ is a double covering of the unique optimal elliptic curve $E$ having real Weil polynomial $t+2$ described in Remark \ref{remg1N5}. Since $X$ has no places of degree $2$, no rational point of $E$ can be inert in $X$. Hence, since $X$ has eight rational points, there is only one possibility for the five rational points of $E$: three of them split completely and two are totally ramified. Denoting by $P$ and $P'$ the two wildly ramified rational points of $E$, we have that the contribution to the different of the quadratic function field extension $\IF_{2}(X)/\IF_{2}(E)$ is at least $2P+2P'$. Since the degree of the different has to be $6$ by the Hurwitz formula, the different, which is also the conductor of the extension, is $4P+2P'$ or $2P+4P'$. Thus any optimal genus $4$ curve over $\IF_2$ is a double covering of the optimal elliptic curve $E$ of conductor $4P+2P'$ or $2P+4P'$, in which the other three rational points of $E$ split completely. Uniqueness of $X$ follows from the fact that $\mathrm{Aut}(E)$ acts doubly transitively on $E(\IF_{2})$ as described in Remark \ref{remg1N5}.
\end{proof}

\section{Uniqueness of genus $5$ optimal curves}

\begin{lemma}\label{lemmag5}
Let $C$ be the hyperelliptic curve over $\IF_{2}$ of affine equation $y^2 + y = x^5 + x^3$. Let $P$ be a rational point of $C$ and let $K$ be the ray class field of $\IF_{2}(C)$ of conductor $4P$ in which all rational points of $C$ except $P$ split completely. Then $K=\IF_{2}(C)$ except when $P$ is the point at infinity, in which case we have $[K:\IF_{2}(C)]=2$.
\end{lemma}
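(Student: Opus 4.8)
The plan is to compute the Galois group $\Gal(K/\IF_{2}(C))$ by way of the ray class group of conductor $4P$ and to reduce everything to a computation inside a local unit group of order $8$. First I would record the invariants of $C$. The curve $y^{2}+y=x^{5}+x^{3}$ is hyperelliptic of genus $2$ with the five rational points $\infty,(0,0),(0,1),(1,0),(1,1)$; counting points over $\IF_{2}$ and $\IF_{4}$ gives the real Weil polynomial $t^{2}+2t-2$, so $|\mathrm{Pic}^{0}(C)|=h(3)=13$ and $\mathrm{Pic}^{0}(C)\cong\mathbb{Z}/13\mathbb{Z}$. The decisive structural fact is the difference between the Weierstrass semigroups: at the unique Weierstrass point $\infty$ it is $\langle 2,5\rangle$ (gaps $1,3$), while at each affine rational point it is $\langle 3,4,5\rangle$ (gaps $1,2$). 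This is exactly what will make $\infty$ behave differently from the affine points.

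Next I would set up, for a rational point $P$, the exact sequence of the ray class group of modulus $4P$,
\[
0\longrightarrow (\mathcal{O}_{P}/\mathfrak{m}_{P}^{4})^{*}\longrightarrow \mathrm{Cl}^{0}_{4P}(C)\longrightarrow \mathrm{Pic}^{0}(C)\longrightarrow 0,
\]
where the local factor is $(\mathcal{O}_{P}/\mathfrak{m}_{P}^{4})^{*}\cong\mathbb{Z}/4\mathbb{Z}\times\mathbb{Z}/2\mathbb{Z}$ of order $8$ (in characteristic $2$ one has $(1+\pi)^{2}=1+\pi^{2}$, so $1+\pi$ has order $4$ and $1+\pi^{3}$ order $2$). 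By class field theory $\Gal(K/\IF_{2}(C))$ is the quotient of $\mathrm{Cl}^{0}_{4P}(C)$ by the subgroup generated by the classes of the four rational points $S$ distinct from $P$. Since any two distinct points of a curve of positive genus give a non-trivial class, and $13$ is prime, the image of $S$ in $\mathrm{Pic}^{0}(C)$ already generates all of $\mathbb{Z}/13\mathbb{Z}$; hence the odd part of the Galois group dies and $[K:\IF_{2}(C)]$ is a power of $2$, equal to the order of $(\mathcal{O}_{P}/\mathfrak{m}_{P}^{4})^{*}/V_{P}$, where $V_{P}$ is the subgroup cut out by the relations among the classes of $S$.

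To evaluate this quotient I would pass to the dual (Artin--Schreier) description: $\Gal(K/\IF_{2}(C))$ is dual to the group of classes $[f]\in \IF_{2}(C)/(\wp(\IF_{2}(C))+\IF_{2})$, with $\wp(z)=z^{2}+z$, represented by a function $f$ with poles only at $P$, of local ramification jump at most $3$ (so that the conductor divides $4P$), and with $f(Q)=0$ for every $Q\in S$ (so that each such $Q$ splits, as $\wp(\IF_{2})=\{0\}$). For $P=\infty$ I expect to exhibit the class explicitly: the function $f=x^{3}+x$ has its only pole at $\infty$, vanishes at all four affine points, and is non-trivial modulo $\wp$, because $x^{3}+x=\wp(h)$ would force $\mathrm{ord}_{\infty}(h)=-3$, which is impossible since $3$ is a gap at $\infty$. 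Computing the evaluation map on $L(6\infty)=\langle 1,x,x^{2},y,x^{3}\rangle$ shows its kernel is spanned by $x^{2}+x=\wp(x)$ and $x^{3}+x$, so up to $\wp$ there is exactly one non-trivial admissible class; this should give $[K:\IF_{2}(C)]=2$. For an affine point $P$ the set $S$ now contains $\infty$, and the analogous space of admissible classes should be trivial: the gap structure $\langle 3,4,5\rangle$ together with the extra vanishing condition at $\infty$ forces the unique function with pole divisor $\le 6P$ vanishing on $S$ to have trivial Artin--Schreier class (or jump $>3$), whence $K=\IF_{2}(C)$.

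The main obstacle is the affine case. One must produce, uniformly in the four affine points $P$, explicit functions with a single pole at $P$ of each order $3,4,5,6$, evaluate them at the remaining rational points including $\infty$, and check that no non-trivial class of conductor dividing $4P$ splits all of $S$; here the semigroup $\langle 3,4,5\rangle$ (rather than $\langle 2,5\rangle$) is what kills the candidate extension. A secondary point, needed to pin down $[K:\IF_{2}(C)]=2$ rather than $4$ when $P=\infty$, is to rule out a cyclic degree-$4$ quotient: since the $2$-part of the ray class group is $\mathbb{Z}/4\mathbb{Z}\times\mathbb{Z}/2\mathbb{Z}$, I would verify that the quadratic extension defined by $x^{3}+x$ does not embed into an $S$-split $\mathbb{Z}/4\mathbb{Z}$-extension of conductor $4\infty$, which follows from the ramification-jump data at $\infty$.
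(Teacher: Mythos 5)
Your overall strategy---identifying $\Gal(K/\IF_{2}(C))$ with the quotient of the ray class group of conductor $4P$ by the classes of the split points, killing the odd part via $\#Pic^{0}(C)=13$, and then computing the surviving $2$-part---is sound, and up to that point it coincides with the paper's reduction to the quotient of $R=\bigl(\IF_{2}[[t]]/(t^{4})\bigr)^{*}\simeq\mathbb{Z}_{4}\times\mathbb{Z}_{2}$ by the image of the $S$-unit group. Where you diverge is in computing that quotient: the paper exhibits explicit $S$-units and their local expansions at $P$, while you pass to the dual (Artin--Schreier) side. For $P=\infty$ your argument is essentially right: $x^{3}+x$ does cut out the unique nontrivial admissible class, and your ramification-jump argument correctly rules out a $\mathbb{Z}_{4}$-quotient. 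But two points are asserted rather than proved. The gap argument (no global function with pole of order $3$ at $\infty$) shows only that $x^{3}+x$ is nontrivial modulo $\wp$; to conclude that the corresponding quadratic extension lies inside $K$ you must check that the \emph{local} reduced pole order at $\infty$ is $3$ rather than $5$, which requires expanding $x^{3}+x$ in a uniformizer (with $t=y/x^{3}$ one finds $x^{3}+x\equiv t^{-3}+t^{-1}+O(t^{2})$ modulo $\wp$ of the completion, so the conductor is indeed $4\infty$---this is precisely the kind of local expansion the paper carries out for its $S$-units, and it is absent from your write-up). You should also justify that every admissible class has a representative in $L(6\infty)$, which needs a short argument comparing global and local reduction of even pole orders.

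The genuine gap is the affine case, which is four of the five points and the entire ``$K=\IF_{2}(C)$'' half of the lemma: you describe what must be checked but do not check it. Concretely, for an affine rational point $P$ one must show that the evaluation map $L(6P)\to\IF_{2}^{4}$ at the four points of $S$ (now including $\infty$) has kernel consisting only of classes trivial modulo $\wp(\IF_{2}(C))+\IF_{2}$; since $\wp(L(3P))$ already supplies a nonconstant element of that kernel, the natural way to finish is to prove the evaluation map surjective, and that does require writing down the functions with pole of order $3,4,5,6$ at $P$ and their values. You also miss the simplification that turns four computations into one: the order-$4$ automorphism $(x,y)\mapsto(x+1,\,y+x^{2}+1)$ of $C$ fixes $\infty$ and permutes the four affine rational points transitively, so it suffices to treat $P=(0,0)$; this is exactly how the paper proceeds, verifying that the images of the two $S$-units $x+1$ and $y+1$ already generate all of $R$ there. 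As it stands, the proposal makes the $[K:\IF_{2}(C)]=2$ half convincing but leaves the triviality half as an unexecuted plan.
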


\begin{proof}
Let $t$ denote a uniformizer at $P$ and let $S=C(\IF_{2})\backslash \{P\}$. By Artin reciprocity the Galois group $\Gal(K/\IF_2(C))$ is isomorphic to the $S$-ray class group of $C$ modulo $4P$ \cite[Section 2.5]{niederreiterxing}. In this case the latter is isomorphic to a quotient of $R=\Big (\IF_2[[t]]/(t^4)\Big)^*\simeq \mathbb{Z}_4 \times \mathbb{Z}_2$ by the $S$-unit group of $C$ \cite[Section 8]{schoof}. We show that if $P$ is the point at infinity of $C$ we have \mbox{$\Gal(K/\IF_2(C))\simeq \mathbb{Z}_2$}. On the other hand, if $P$ is one of the other rational points
\[
P_0=(0,0),\, P_0'=(0,1), \, P_1=(1,0),\textrm{ or } P_1'=(1,1)
\]
of $C$, the group $\Gal(K/\IF_2(C))$ is trivial. A sketch of the computations follows.
\begin{enumerate}
\item[$i)$] Let $P$ be the point at infinity of $C$. Then a basis for the $S$-unit group of $C$ consists of the functions with principal divisors given by
\begin{eqnarray*}
\Big( \frac{y+x^{3}}{x^{3}}\Big) & = &2P_{0}+P_{1}'-3P_{0}',\\
\Big( \frac{y+1}{y}\Big) \;& = &3(P_{0}'-P_{0})+2(P_{1}'-P_{1}),\\
\Big( \frac{x+1}{x}\Big) \;& = &P_{1}-P_{0}+P_{1}'-P_{0}'.
\end{eqnarray*}
Let $t=y/{x^3}$ be a uniformizer at $P$, then their images in $R$ are:
\begin{eqnarray*}
\frac{y+x^{3}}{x^{3}} & \equiv & 1+t\;\,\textrm{mod}\, t^{4},\\
\frac{y+1}{y} \;& = &1+\frac{1}{y} \equiv 1+t^{5}\equiv 1 \;\, \textrm{mod} \, t^{4},\\
\frac{x+1}{x} \;& = &1+\frac{1}{x}\equiv 1+t^{2} \;\, \textrm{mod} \, t^{4},
\end{eqnarray*}
since $1/y=t^{5}+O(t^{6})$ and $1/x=t^{2}+O(t^{4})$. The element $1+t$ generates a subgroup $R'$ of $R$ of index $2$ and $1+t^{2} \in R'$. Therefore $\Gal(K/\IF_{2}(C))\simeq R/R'\simeq \mathbb{Z}_{2}$. 
\item[$ii)$] Let $P=P_0$ and $x$ a uniformizer at $P$. In this case consider the two $\IF_{2}$-linearly independent $S$-units of divisors given by
\begin{eqnarray*}
(x+1) & = &P_{1}+P_{1}'-2P_{\infty},\\
(y+1) & = & 3P_{0}'+2P_{1}'-5P_\infty.
\end{eqnarray*}
Here $P_{\infty}$ denotes the point at infinity of $C$. By means of Hensel's lemma, we compute the local expansion of $y$ at $P_0$ as $y=x^5+x^3+O(x^6)$. Therefore their images in $R$ are
\begin{eqnarray*}
x+1 & \equiv & 1+x \phantom{^{3}}\;\,\textrm{mod} \, x^{4},\\
y+1 & \equiv & 1+x^{3} \;\,\textrm{mod} \, x^{4}.
\end{eqnarray*}
In this case the group $R$ is generated by the images of the $S$-units and thus the quotient group is trivial. 
\end{enumerate}
The other possibilities for $P$ reduce to case $ii)$ by applying the order $4$ automorphism $\varphi:(x,y) \mapsto (x+1, y+x^2+1)$ of $C$. It fixes the point at infinity of $C$ and acts transitively on the other rational points of $C$.
\end{proof}

\begin{proof}[Proof of Theorem \ref{uniquecurve} for $g=5$]
By Theorem \ref{RealWeil} a genus $5$ optimal curve $X$ defined over $\IF_2$ has real Weil polynomial $h(t)=t(t+2)^2(t^2+2t-2)$. Since the principal ideal $(t(t+2),t^2+2t-2) \cap \mathbb{Z}$ is generated by $2$, Proposition \ref{Res2} implies that the curve $X$ is a double covering of a curve $C$ having real Weil polynomial either $t(t+2)^2$ or $t^2+2t-2$. If $C$ had $t(t+2)^2$ as real Weil polynomial, it would be a genus $3$ curve having seven rational points over $\IF_2$, which is impossible by Theorem \ref{RealWeil}.\\
Hence $C$ is a genus $2$ curve having five rational points and no place of degree $2$. Every genus $2$ curve defined over $\IF_2$ is a hyperelliptic curve. Up to $\IF_2$-isomorphism there exists a unique hyperelliptic curve $C$ over $\IF_{2}$ having real Weil polynomial $t^2+2t-2$. Indeed such a hyperelliptic curve has five rational points and no place of degree $2$. Thus the different of the function field extension associated to the double covering $C \to \IP^1$ has to be $6Q$, where $Q$ is a rational point of $\IP^1$. According to the classification of genus $2$ curves over $\IF_{2}$ in \cite[page 327]{maisnernart}, by taking $Q=\infty$, any such hyperelliptic curve is $\IF_2$-isomorphic to a projective curve of affine equation $y^2+y=x^5+ax^3+bx^2+c$, with $a,b,c \in \IF_2$. Of the eight possible equations arising from the choice of the parameters $a,b,c$, only the affine equation $y^2 + y = x^5 + x^3$ describes a projective curve having five rational points over $\IF_{2}$ and no places of degree $2$.\\ 
Since $X$ has nine rational points, only one rational point $P$ of $C$ ramifies in the double covering $X \to C$, while the other four rational points of $C$ split completely in $X$. The different of $\IF_{2}(X)/\IF_{2}(C)$ is hence $4P$, since it must have degree $4$ by the Hurwitz formula. The function field $\IF_{2}(X)$ is hence an abelian extension of $\IF_{2}(C)$ of conductor $4P$, where the other four rational points of $C$ split completely. The maximal among such abelian extensions is the ray class field $K$ described in Lemma \ref{lemmag5}. Hence $P$ is the point at infinity of $C$ and $\IF_{2}(X)=K$.
\end{proof}

\section{Genus $g=6$ optimal curves}\label{sec: unique6}

Theorem \ref{RealWeil} lists the two possible real Weil polynomials of an optimal genus $6$ curve defined over $\IF_2$. In this section we give a proof of the existence of a unique genus $6$ curve for each of the two listed polynomials. 

\begin{proposition}
Up to $\IF_{2}$-isomorphism, there is a unique curve having real Weil polynomial as in \eqref{rWa} of Theorem \ref{RealWeil}.
\end{proposition}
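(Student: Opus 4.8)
The plan is to realise any curve $X$ with real Weil polynomial \eqref{rWa} as a double cover of the optimal elliptic curve $E$ of Remark \ref{remg1N5}, and then to pin the cover down by class field theory exactly as in the genus $4$ and genus $5$ arguments. First I would peel off the elliptic factor. Writing $h(t)=(t+2)\,h_{2}(t)$ with $h_{2}(t)=t(t^{4}+5t^{3}+5t^{2}-5t-5)$, the two factors are coprime and $h_{2}$ is separable, so the radical of $h_{2}$ is $h_{2}$ itself and the resultant of $t+2$ with it is $h_{2}(-2)=(-2)\cdot 1=-2$. Proposition \ref{EllFact} then gives a map of degree dividing $2$ from $X$ to an elliptic curve isogenous to the curve with real Weil polynomial $t+2$; since $E$ is the unique such curve, this is a double cover $X\to E$.

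Next I would read off the ramification. As $X$ has no place of degree $2$, no rational point of $E$ is inert, and since $\#X(\IF_{2})=10=2\cdot\#E(\IF_{2})$ all five rational points of $E$ split completely and none ramifies. Hurwitz gives $\deg D=2g_{X}-2=10$ for the different $D$. A ramified place of degree $d$ contributes at least $2d$ to $D$ by wild ramification, so a degree $4$ place cannot ramify (it would contribute $8$ and leave a residual $2$ that no remaining place can supply, the rational points being unramified), and at most one degree $5$ place can ramify. The budget $10$ then forces exactly one degree $5$ place $Q$ of $E$ to ramify, with different exponent $2$, so that $D=2Q$ is also the conductor. Hence $\IF_{2}(X)$ is a degree $2$ extension of $\IF_{2}(E)$ of conductor $2Q$ in which all five rational points of $E$ split completely.

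The heart of the argument is then to count such extensions, in the spirit of Lemma \ref{lemmag5}. Let $S=E(\IF_{2})$ and let $K$ be the ray class field of $\IF_{2}(E)$ of modulus $2Q$ in which every place of $S$ splits. Because $S$ contains rational points, the $S$-class group of $E$ is trivial, so $\Gal(K/\IF_{2}(E))$ is the quotient of $R=(\mathcal{O}_{Q}/\mathfrak{m}_{Q}^{2})^{*}\cong \mathbb{Z}/31\times(\mathbb{Z}/2)^{5}$ by the image of the $S$-unit group. Passing to $R/R^{2}\cong(\mathbb{Z}/2)^{5}\cong(\IF_{32},+)$ isolates the degree $2$ information. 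The $S$-unit group has rank $|S|-1=4$, and the crux is to exhibit four explicit $S$-units of $E$ and to check that their local expansions at $Q$ are linearly independent in $R/R^{2}$. This yields a one-dimensional quotient, hence a unique degree $2$ subextension; it is automatically ramified at $Q$ of conductor $2Q$, since tame degree $2$ ramification is impossible in characteristic $2$ and $\mathrm{Cl}^{S}=0$ leaves no everywhere-unramified degree $2$ extension.

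Finally, the choice of $Q$ is immaterial: by Remark \ref{remg1N5} the automorphism $\tau$ of $E$ permutes the four degree $5$ places of $E$ transitively, so all choices of $Q$ give $\IF_{2}$-isomorphic curves, and the forward direction above shows every \eqref{rWa}-curve arises this way; thus there is at most one such curve. Existence comes from the non-trivial extension just constructed, whose associated curve has genus $6$ and ten rational points; by Theorem \ref{RealWeil} its real Weil polynomial is \eqref{rWa} or \eqref{rWb}, and as a \emph{degree two} cover of $E$ it must be \eqref{rWa} (the complementary factor of \eqref{rWb} has resultant $3$ with $t+2$). I expect the main obstacle to be the class field computation of the third paragraph: one must produce generators of the $S$-unit group of $E$ as explicit rational functions and expand them in the completion at a degree $5$ place over $\IF_{32}$ to verify the linear independence, a step best carried out with machine assistance.
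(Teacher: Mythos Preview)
Your approach matches the paper's: apply Proposition~\ref{EllFact} to the factor $t+2$ (resultant $-2$) to obtain a double cover $X\to E$, read off from $a(X)$ and Hurwitz that the conductor is $2R$ for a single degree~$5$ place $R$ of $E$, and then use that $\tau$ permutes the four degree~$5$ places of $E$ transitively. The paper is in fact terser than you are: it does not carry out the ray class computation of your third paragraph but simply takes as given that a double cover of $E$ with conductor $2R$ in which all of $E(\IF_{2})$ splits is unique. Your instinct that this step deserves verification is sound; one can either perform the explicit $S$-unit calculation you outline, or observe that two distinct such covers would generate a $(\mathbb{Z}_{2}\times\mathbb{Z}_{2})$-cover $Y\to E$ of conductor $2R$, giving a genus~$16$ curve with $20$ rational $\IF_{2}$-points, which violates Serre's estimate $\#Y(\IF_{2})\le 0.83g+5.35$ quoted in Section~\ref{sec: zetafunctions}.

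One slip to fix: the existence argument in your last paragraph does not work as written. Proposition~\ref{EllFact} says that resultant $r$ guarantees a map of degree \emph{dividing} $r$; it does not say that a curve with real Weil polynomial \eqref{rWb} cannot also be a double cover of $E$, so the resultant being $3$ there proves nothing. The clean repair is arithmetic: in your constructed cover exactly one degree~$5$ place of $E$ ramifies, so $a_{5}(X)$ is odd (one place above the ramified $R$, and $0$ or $2$ above each of the remaining three), whereas \eqref{rWb} has $a_{5}=2$. Alternatively, existence is already furnished by the references to Serre cited in the introduction and need not be reproved.
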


\begin{proof}
Let $X$ be a genus $6$ optimal curve defined over $\IF_2$ having real Weil polynomial $h(t)=t(t+2)(t^4+5t^3+5t^2-5t-5)$. Since the resultant of the factors $t+2$ and $t(t^4+5t^3+5t^2-5t-5)$ is $-2$, there exists a degree $2$ morphism $X \to E$ by Proposition \ref{EllFact}. All of the five rational points of $E$ split completely into the ten rational points of $X$. By the Hurwitz formula the degree of the different of $\IF_{2}(X)/\IF_{2}(E)$ is $10$. Now, since $a_2(X)=a_3(X)=a_4(X)=0$, the different is precisely $2R$, where $R$ is a degree $5$ place of $E$. Thus, any such optimal genus $6$ curve is a double covering of $E$ of conductor $2R$, in which all rational points of $E$ are split. As observed in Remark \ref{remg1N5}, the elliptic curve $E$ has actually four points of degree $5$ and the $\IF_{2}$-automorphism $\tau$ of $E$ acts transitively on them. The choice of a different degree $5$ ramifying point, gives thus an $\IF_2$-isomorphic curve.
\end{proof}

In the rest of the section, let $X$ be a genus $6$ optimal curve over $\IF_2$ having real Weil polynomial as in \eqref{rWb} of Theorem \ref{RealWeil}.

\begin{proposition}\label{Xb}
Up to $\IF_{2}$-isomorphism, there is a unique curve having real Weil polynomial as in \eqref{rWb} of Theorem \ref{RealWeil}.
\end{proposition}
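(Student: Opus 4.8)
The plan is to mimic the earlier cases by first extracting a forced low-degree map from the factorisation $h(t)=(t-1)(t+2)(t^2+3t+1)^2$, and then to reduce the uniqueness question to an abelian cover to which class field theory applies. The only factor giving a small resultant is $t+2$: the resultant of $t+2$ with the radical $(t-1)(t^2+3t+1)$ of the complementary factor is $3$, so Proposition \ref{EllFact} forces a map $X\to E'$ of degree dividing $3$ to an elliptic curve $E'$ isogenous to $E$. Such an $E'$ again has five rational points, and the five-point elliptic curve over $\IF_2$ is unique (Section \ref{sec: unique01}), so $E'=E$; since $X$ and $E$ have different genera the degree is exactly $3$. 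First I would analyse the fibres of $X\to E$: because $a_2(X)=a_3(X)=a_4(X)=0$, a rational point of $E$ can only split completely, ramify totally and tamely ($e=3$), or ramify partially of type $2+1$. Writing $s,t,p$ for the numbers of these, $s+t+p=5$ and $3s+t+2p=10$; in a cyclic (Galois) cover the middle type is absent and one needs $2s=5$, which is impossible. Hence the cover is non-Galois, with group $S_3$.

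This non-Galois feature is the crux, since the ray class field machinery of the genus $\le 5$ arguments applies only to abelian covers. To restore an abelian situation I would pass to the Galois closure $\tilde X$ of $\IF_2(X)/\IF_2(E)$, with $\mathrm{Gal}(\tilde X/E)\cong S_3$; the fixed field of $A_3$ is a quadratic resolvent $E''\to E$, and $\tilde X\to E''$ is cyclic of degree $3$, hence is cut out by a ray class field of $\IF_2(E'')$. The resolvent $E''\to E$ ramifies exactly at the transposition (partially ramified) places of $X\to E$, so the Hurwitz relation $\deg D=10$ together with the constraint $a_5(X)=2$ on the degree-$5$ places of $E$ should pin down the ramification, hence $E''$. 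I would then classify the relevant cyclic cubic ray class field of $\IF_2(E'')$ by Artin reciprocity, exactly as in Lemma \ref{lemmag5}, and descend through the $S_3$-tower, invoking the corollary of Torelli's theorem, to conclude that $X$ is unique up to $\IF_2$-isomorphism.

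A more elementary alternative, closer to the earlier sections, is to realise $X$ directly as a double cover of the genus $2$ optimal curve $C$ of real Weil polynomial $t^2+3t+1$. The isogeny decomposition $\mathcal{J}ac(X)\sim E_1\times E\times \mathcal{J}ac(C)^2$, where $E_1$ is the two-point elliptic curve of real Weil polynomial $t-1$, is consistent with a quadratic map $X\to C$ whose Prym has real Weil polynomial $(t-1)(t+2)(t^2+3t+1)$. For such a cover, $a_2(X)=a_3(X)=a_4(X)=0$ forces the unique degree-$3$ and degree-$4$ places of $C$ to be inert, four of the six rational points of $C$ to split and two to ramify, and --- by Hurwitz ($\deg D=6$) with wild ramification in characteristic $2$ --- a conductor with exponents $\{2,4\}$. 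Uniqueness would then follow from a ray class field computation over $\IF_2(C)$ and the action of $\mathrm{Aut}(C)$ on the rational points, in the manner of the genus $2$ and genus $5$ proofs.

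The main obstacle is precisely this non-Galois degree-$3$ cover: every coprime factorisation of $h(t)$ has radicals with reduced resultant $3$, $5$ or $15$, never $2$, so neither Proposition \ref{Res2} nor Proposition \ref{EllFact} hands us a quadratic map, and the convenient abelian framework must be built by hand. The real work is therefore either (i) identifying $E''$ and carrying out the cyclic cubic ray class field computation over this non-rational base, or (ii) proving that the double cover $X\to C$ actually exists, since the repeated factor $(t^2+3t+1)^2$ lies outside the scope of the quoted propositions and calls for a separate gluing argument in the style of Howe--Lauter. I expect this existence-and-identification step --- quite possibly the one supported by the MAGMA computation acknowledged in the paper --- to be the heart of the proof; once the correct abelian cover is secured, verifying uniqueness and matching up the isomorphic choices is routine.
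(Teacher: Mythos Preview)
Your setup is exactly right and matches the paper: the resultant forces a degree~$3$ map $X\to E$, the map is non-Galois, the Galois closure $\overline X$ has group $S_3$, and the quadratic resolvent $X'$ (your $E''$) is the natural abelian handle. The paper does all of this (Lemma~\ref{nonnormal6}, Definition~\ref{defioverlineprime}, Lemmas~\ref{abcPoints}--\ref{XX'}). You under-sketch one substantial piece: eliminating the three possible ramification patterns $(a,b,c)\in\{(0,5,0),(1,3,1),(2,1,2)\}$ down to the single case $(0,5,0)$ takes real work (Proposition~\ref{possibleX}), including a ray class field computation over $E$ (Lemma~\ref{rayclass2}) and a MAGMA verification to kill case~$II$.

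Where your plan diverges is the final uniqueness step, and here the paper uses an idea you do not anticipate. Rather than classifying cubic ray class fields of the genus~$6$ resolvent $X'$ directly, the paper composes with the degree~$5$ isogeny $\tau+2:E\to E$, whose kernel is exactly $E(\IF_2)$. Because in case~$I$ the ramification locus of $X'\to E$ is all of $E(\IF_2)$ and the degree~$4$ places are preserved, the whole tower $\overline X\to X'\to E\xrightarrow{\tau+2}E$ becomes Galois with group $\mathbb{Z}_5\times S_3$. Taking invariants under $\mathbb{Z}_5\times\mathbb{Z}_2$ produces a genus~$2$ curve $C$ and an \emph{unramified cyclic degree~$5$} cover $X\to C$. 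The real Weil polynomial of $C$ is $(t+2)(t-1)$, not $t^2+3t+1$; the paper shows this $C$ is unique (Lemma~\ref{unhyper}) and that it admits exactly one unramified $\mathbb{Z}_5$-cover with two prescribed split rational points (Lemma~\ref{rayclass3}), which pins down $X$.

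So your alternative~(ii) aims at the wrong genus~$2$ curve: the cover the paper actually produces is degree~$5$ to the four-point curve with $h(t)=(t+2)(t-1)$, not degree~$2$ to the six-point optimal curve with $h(t)=t^2+3t+1$. There is no evident mechanism for a quadratic map to the latter, as you yourself note. Your alternative~(i) is closer in spirit and could perhaps be completed --- the paper does observe that $\overline X\to X'$ is the unique unramified cyclic cubic cover splitting all rational points, since $3\,\|\,\#\mathrm{Pic}^0(X')$ --- but you would still need to prove $X'$ itself is unique, and the paper sidesteps that by the $\tau+2$ trick instead.
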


\begin{lemma}\label{nonnormal6}
The curve $X$ is a non-Galois covering of degree $3$ of the elliptic curve $E$ such that $X$ is unramified outside of $E(\IF_{2})$.
\end{lemma}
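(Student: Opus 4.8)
The plan is to use the factorization of the real Weil polynomial in \eqref{rWb} together with Proposition \ref{EllFact} to produce the map to $E$. We have $h(t)=(t-1)(t+2)(t^2+3t+1)^2$. The factor $t+2$ is the real Weil polynomial of the optimal elliptic curve $E$ of Remark \ref{remg1N5}. First I would compute the resultant of $t+2$ with the radical of the complementary factor $(t-1)(t^2+3t+1)$. Evaluating $(t-1)(t^2+3t+1)$ at $t=-2$ gives $(-3)(4-6+1)=(-3)(-1)=3$, so the resultant is $3$. By Proposition \ref{EllFact}, the curve $X$ therefore admits a map of degree dividing $3$ to an elliptic curve isogenous to $E$; since $3$ is prime and $X$ has genus $6>1$, this degree is exactly $3$, and one checks that the target elliptic curve, being isogenous to $E$ over $\IF_2$ and hence having the same real Weil polynomial $t+2$, is $\IF_2$-isomorphic to $E$ itself.

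Next I would rule out that the degree $3$ cover $X\to E$ is Galois. If it were Galois it would be a cyclic $\mathbb{Z}/3$-cover, and the Galois action would force the factor $t^2+3t+1$ to appear in the real Weil polynomial of $X$ in a way compatible with a cyclic cubic covering: the eigenvalues of Frobenius on the part of $\mathcal{J}ac(X)$ complementary to $\mathcal{J}ac(E)$ would have to be permuted by the order $3$ automorphism, so the corresponding characteristic polynomial would have to be invariant under the cyclotomic Galois action of $\mathbb{Q}(\zeta_3)$. I would argue that the shape $(t^2+3t+1)^2$ of the complementary factor is incompatible with this: a cyclic cubic cover produces a character decomposition into the two nontrivial cubic characters, which are complex conjugate, so the new part of the Weil polynomial would be a norm from $\mathbb{Z}[\zeta_3]$ and could not be a perfect square of a polynomial with the arithmetic of $t^2+3t+1$. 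Alternatively, and more robustly, I would count ramification: a Galois degree $3$ cover of $E$ unramified outside $E(\IF_2)$ would have different divisor supported on rational points with prescribed wild contributions, and the Hurwitz formula $2\cdot 6-2=3(2\cdot 1-2)+\deg D$ gives $\deg D=10$; I would show no distribution of tame/wild ramification over the five rational points of $E$ is consistent with both $a(X)=[10,0,0,0,2,15,\dots]$ and the Galois hypothesis.

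I would then establish that $X$ is unramified outside $E(\IF_2)$. From the Hurwitz formula above, $\deg D=10$. Since $a_2(X)=a_3(X)=a_4(X)=0$, no place of $E$ of degree $2$, $3$ or $4$ can have a ramified place of $X$ above it whose degree would be forced into these forbidden ranges; more precisely, a ramified place of degree $d$ on $E$ contributes a place of degree a multiple of $d$ on $X$, and the vanishing of $a_2,a_3,a_4$ together with the precise value $a_5(X)=2$ (versus $a_5(E)=4$) constrains the support of the different to lie over $E(\IF_2)$. I would carry out this bookkeeping by comparing, degree by degree, the splitting behaviour recorded in $a(X)$ against $a(E)=[5,0,0,5,4,10,\dots]$ from Remark \ref{remg1N5}, showing that all ramification is concentrated at the rational points.

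The main obstacle I expect is the non-Galois claim, since Proposition \ref{EllFact} only yields the existence of a degree $3$ map and says nothing about whether it is cyclic; distinguishing a genuine non-normal cubic cover from a cyclic one purely from the Weil polynomial requires care. The cleanest route is probably the Frobenius-eigenvalue argument: I would show that if the cover were cyclic of degree $3$, the complementary factor of the real Weil polynomial would have to be a norm polynomial from the order $\mathbb{Z}[\zeta_3]$ attached to the cubic characters, whereas $(t^2+3t+1)^2$ factors over $\mathbb{R}$ with real roots $\frac{-3\pm\sqrt5}{2}$ lying in the totally real field $\mathbb{Q}(\sqrt5)$, which is incompatible with the imaginary cubic character structure. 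Establishing this incompatibility rigorously — rather than heuristically — is the delicate point, and it is where I would spend the most effort.
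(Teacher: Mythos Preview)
Your use of Proposition \ref{EllFact} to obtain the degree $3$ map to $E$ is correct and matches the paper. The gap is in the non-Galois step: you reach for a Frobenius-eigenvalue / $\mathbb{Z}[\zeta_3]$-structure argument that you yourself flag as delicate, and indeed the assertion that the complementary factor ``would have to be a norm from $\mathbb{Z}[\zeta_3]$'' is not a precise statement and does not obviously lead to a contradiction (a $5$-dimensional Prym can perfectly well carry a $\mathbb{Z}[\zeta_3]$-action). Your fallback ramification-counting sketch presupposes that $X\to E$ is already known to be unramified outside $E(\IF_2)$, which is part of what you are trying to prove.

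The paper's argument is entirely elementary and avoids all of this. Since $a_2(X)=a_3(X)=0$, every place of $X$ lying over a point of $E(\IF_2)$ is itself rational; hence each $P\in E(\IF_2)$ has splitting type $(1,1,1)$, $(2,1)$, or $(3)$ (ramification indices), called $A$-, $B$-, $C$-points. Writing $a,b,c$ for the number of each, one has $a+b+c=5$ and $3a+2b+c=10$, whence $b=5-2a\ge 1$. A $B$-point has two preimages with distinct ramification indices, which is impossible in a Galois cover; so $X\to E$ is non-Galois. The same case analysis gives the rational contribution to the different as at least $2b+2c\ge 6$ in every case; since $a_2(X)=a_3(X)=a_4(X)=0$ forces any non-rational ramified place of $E$ to have degree $\ge 5$, and such a place would contribute at least $2\cdot 5=10$ to the different (tame $e=3$ gives exponent $2$; wild $e=2$ gives exponent $\ge 2$), the Hurwitz bound $\deg D=10$ leaves no room for it. Both conclusions thus fall out of the same bookkeeping, with no appeal to the structure of $\mathrm{End}(\mathcal{J}ac(X))$.
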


The following definition introduces a notation for the splitting behavior of the rational points of the elliptic curve $E$.

\begin{definition}\label{abcpointsE}
Let $X\to E$ be a degree $3$ covering defined over $\IF_{2}$. Consider a rational point $P$ of $E$. We say that $P$ is
\begin{enumerate}
\item[$a)$] an $A$-point, if $P$ splits completely in $X$;
\item[$b)$] a $B$-point, if $P$ splits into two points of $X$, one unramified and the other one with ramification index $2$;
\item[$c)$] a $C$-point, if $P$ is totally ramified in $X$ with ramification index $3$.
\end{enumerate}
Moreover we denote by $a$, $b$, $c$ the number of $A$-points, $B$-points and $C$-points of $E$ respectively. 
\end{definition}

\begin{proof}[Proof of Lemma \ref{nonnormal6}]
By Theorem \ref{RealWeil}, the real Weil polynomial of $X$ is $h(t)=(t-1)(t+2)(t^2+3t+1)^2$. Since the resultant of the polynomials $t+2$ and $(t-1)(t^2+3t+1)$ is equal to $3$, by Proposition \ref{EllFact} the curve $X$ admits a morphism of degree $3$ to the optimal elliptic curve $E$ described in Remark \ref{remg1N5}. Since the parameters of $X$ are $a(X)=[10,0,0,0,2,15,\dots]$, there are no places of degree $2$ or $3$ on $X$. Therefore each of the $\IF_2$-rational points in $E$ can hence be either an $A$-point, a $B$-point or a $C$-point in the sense of Definition \ref{abcpointsE}. Then we have
\[
a+b+c  =  5 \quad \textrm{and} \quad 3a+2b+c  = 10,
\]
and hence
\[
2a+b=5 \quad \textrm{and}\quad a=c.
\]
This leaves us with the three cases of Table \ref{typesIII}.
\begin{table}[h]
\begin{center}
\begin{tabular}{| l | c | c | c |}
\hline
 $\,$ & $\mathbf{a}$ & $\mathbf{b}$ & $\mathbf{c}$\\
 \hline
 \textbf{case} $I$ & $0$ & $5$ & $0$\\
 \hline
 \textbf{case} $II$ & $1$ & $3$ & $1$\\
 \hline
 \textbf{case} $III$ & $2$ & $1$ & $2$\\
 \hline
\end{tabular}
\caption{Splitting behavior of the rational points of $E$ in $X$}
\label{typesIII}
\end{center}
\end{table}
In each case the covering $X \to E$ is non-Galois since $b$ is never zero. Moreover the function field extension $\IF_2(X)/\IF_2(E)$ is unramified outside of $E(\IF_2)$. Consider indeed the degree of the different, which is $10$ by the Hurwitz formula. By Definition \ref{abcpointsE}, only one of the two points of $X$ lying over a $B$-point of $E$ is wildly ramified. This gives a contribution to the degree of the different which is at least $2$. The contribution to the different that comes from the rational points of $E$ is therefore at least $db+2c$ with $d\geq 2$. Therefore it is at least $5\cdot 2=10$ in case $I$, at least $3\cdot 2 + 2=8$ in case $II$ and at least $1\cdot 2 + 2\cdot 2=6$ in case $III$. Since there are no points of degree $2$, $3$ or $4$ on $X$, any other non-rational ramified place of $E$ should have degree strictly larger than $4$. But this would give a too large contribution to the different in each of the three cases. Hence there are no other places of $E$ ramifying in $X$ but those of degree one.
\end{proof}

\begin{definition}\label{defioverlineprime}
We denote by $\overline{X}$ the curve whose function field is the normal closure of $\IF_2(X)$ with respect to $\IF_2(E)$: it is a Galois extension of $\IF_{2}(E)$ having Galois group isomorphic to the symmetric group $S_3$. We denote by $X'$ the curve having as function field the quadratic extension of $\IF_2(E)$ corresponding to the group $A_3\simeq\mathbb{Z}_3$, the unique (normal) subgroup of $S_3$ of index $2$. The situation is described in the following picture:
\begin{center}
$
\xymatrix@C=0.1pc { 
&&& \overline{X} \ar @{->}[dlll]_(.68){{\scriptscriptstyle 2}} \ar @{->}[dll]_(.67){{\scriptscriptstyle 2}} \ar @{->}[dl]_(.61){{\scriptscriptstyle 2}}  \ar@{->}[drr]^(.64){{\scriptscriptstyle 3}} && \\
{\qquad X} \ar @{->}[drrr]_(.32){{\scriptscriptstyle 3}} & {\;\;Y} \ar @{->}[drr]_(.33){{\scriptscriptstyle 3}} & **[r]Z \ar @{->}[dr]_(.39){{\scriptscriptstyle 3}} & && X' \ar @{->}[dll]^(.36){{\scriptscriptstyle 2}} \\
&&& E & &} 
\quad$
$
\xymatrix@C=0.1pc { 
&&& \{1\} \ar @{->}[dlll]_(.68){{\scriptscriptstyle 2}} \ar @{->}[dll]_(.67){{\scriptscriptstyle 2}} \ar @{->}[dl]_(.61){{\scriptscriptstyle 2}}  \ar@{->}[drr]^(.64){{\scriptscriptstyle 3}} && \\
{\qquad \mathbb{Z}_2} \ar @{->}[drrr]_(.32){{\scriptscriptstyle 3}} & {\;\;\mathbb{Z}_2} \ar @{->}[drr]_(.33){{\scriptscriptstyle 3}} & \mathbb{Z}_2 \ar @{->}[dr]_(.39){{\scriptscriptstyle 3}} & && \mathbb{Z}_3 \ar @{->}[dll]^(.36){{\scriptscriptstyle 2}} \\
&&& G & &} 
$
\end{center}
\end{definition}

We sum up some arithmetical properties of $X'$ and $\overline{X}$ in the following auxiliary lemmas.

\begin{lemma} \label{abcPoints}
\mbox{}
\begin{enumerate}
\item[$a)$] The $A$-points of $E$ split completely in $\overline{X}$ and $X'$.
\item[$b)$] Over each $B$-point of $E$ there are three points of $\overline{X}$, each with ramification index $2$ and there is one point of $X'$ with ramification index $2$.
\item[$c)$] Over each $C$-point of $E$ there is a unique place of $\overline{X}$ of degree $2$.
\end{enumerate}
\end{lemma}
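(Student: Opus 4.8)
The plan is to deduce all three parts from the local behaviour of the $S_3$-cover $\overline{X}/E$. For a prime $\overline{P}$ of $\overline{X}$ lying over a rational point $P$ of $E$, let $D\subseteq S_3$ be its decomposition group and $I\subseteq D$ its (normal) inertia group, so that $D/I$ is cyclic, generated by a Frobenius element $\phi$. Once the conjugacy class of the pair $(D,I)$ is known, the splitting of $P$ in any intermediate curve $\overline{X}^{U}$ --- with $U=\langle(12)\rangle$ giving $X$, $U=A_3$ giving $X'$, and $U=\{1\}$ giving $\overline{X}$ --- is read off from the double cosets in $U\backslash S_3/D$: the coset $UgD$ yields a place of ramification index $|I|/|gIg^{-1}\cap U|$ and residue degree $[gDg^{-1}:gIg^{-1}]/[gDg^{-1}\cap U:gIg^{-1}\cap U]$. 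Thus everything reduces to pinning down $(D,I)$ for each of the three point types.

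Conversely, I would use the already known splitting of $P$ in the cubic $X$ to constrain $(D,I)$. A double-coset count in $\langle(12)\rangle\backslash S_3/D$ shows that $P$ has three places in $X$ precisely when $D=\{1\}$, two places when $D$ is a transposition subgroup, and one place when $D\in\{A_3,S_3\}$. An $A$-point has three rational places, forcing $D=I=\{1\}$; then $P$ has six, respectively two, unramified rational places in $\overline{X}$, respectively $X'$, which is part $a)$. A $B$-point has two places of $X$, so $D$ is a transposition subgroup; the two admissible inertia groups $I=\{1\}$ and $I=D$ produce splitting types $\{(1,1),(1,2)\}$ and $\{(1,1),(2,1)\}$ in $X$ (places labelled by $(e,f)$). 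Since $a_2(X)=0$ there is no degree-two place on $X$, ruling out $I=\{1\}$, so $D=I$ is a transposition subgroup, wildly ramified since $2$ divides $|I|$. Then $[S_3:D]=3$ gives three rational places of $\overline{X}$, each of ramification index $2$, while the single coset in $A_3\backslash S_3/D$ gives one place of $X'$ of ramification index $2$; this is part $b)$.

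The essential difficulty is the $C$-point, which the cubic alone cannot resolve. Being totally and tamely ramified in $X$ (tame because the index $3$ is prime to the characteristic $2$), its inertia group is cyclic of order $3$, hence $I=A_3$, and $D\supseteq A_3$; but both $D=A_3$ and $D=S_3$ give the single place $(e,f)=(3,1)$ of $X$, so they are locally indistinguishable on $X$. To separate them I would invoke the tame relation inside the decomposition group, $\phi\sigma\phi^{-1}=\sigma^{q}=\sigma^{2}=\sigma^{-1}$, where $\sigma$ generates $I=A_3$ and $q=2$ since the residue field of $P$ is $\IF_2$. As $\sigma^{-1}\neq\sigma$, the Frobenius $\phi$ cannot lie in the abelian group $A_3$, so $D\supseteq A_3$ together with $D\neq A_3$ forces $D=S_3$. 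Hence a single prime of $\overline{X}$ lies over $P$, with ramification index $|I|=3$ and residue degree $[S_3:A_3]=2$: a unique degree-two place, which is part $c)$. (The same computation shows a $C$-point is inert in $X'$; the competing configuration $D=A_3$, which would instead split it, is exactly what this tame-Frobenius argument excludes, and is the step I expect to be the crux of the whole lemma.)
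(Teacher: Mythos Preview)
Your argument is correct and runs parallel to the paper's, but is packaged differently. You organise everything through the double-coset formalism for the $S_3$-cover $\overline{X}/E$, while the paper argues case by case more directly. In part $b)$ the paper simply observes that since one point of $X$ over a $B$-point already has ramification index $2$ and $|D|=2$, the inertia is forced to equal $D$; your detour through $a_2(X)=0$ is not wrong but unnecessary, since the very definition of a $B$-point excludes the unramified option $I=\{1\}$. For the crux, part $c)$, you and the paper invoke the same arithmetic obstruction in different language: a totally tamely ramified cyclic degree-$3$ local extension over residue field $\IF_q$ exists only if $3\mid q-1$. The paper applies this via class field theory to the cyclic cover $\overline{X}\to X'$ (``the multiplicative group of the residue field of $P'$ must have order divisible by $3$''), concluding that a $C$-point is inert in $X'$; you reach the same conclusion directly inside $D\subseteq S_3$ via the tame relation $\phi\sigma\phi^{-1}=\sigma^{q}$ with $q=2$. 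One small point worth tightening: the line ``its inertia group is cyclic of order $3$'' does not follow from tameness in $X$ alone, since a priori $\overline{X}/X$ could add wild ramification; what actually pins down $I=A_3$ is that $3\mid |I|$ together with the fact that $S_3$ cannot be an inertia group in characteristic $2$ (it has no normal $2$-subgroup with cyclic quotient).
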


\begin{proof}
Let $Y$ be the degree $3$ covering of $E$ as in the picture above. 
\begin{enumerate}
\item[$a)$] 
Each $A$-point $P$ of $E$ splits completely over $\IF_{2}(Y)\simeq \IF_{2}(X)$ as well. Hence the function field of $\overline{X}$, being the compositum of $\IF_{2}(X)$ and $\IF_{2}(Y)$, is the splitting field of $P$. Moreover, since the function field of $X'$ is contained in it, $P$ splits completely in $X'$ as well.
\item[$b)$] 
Since there is more than one point of $\overline{X}$ lying over a $B$-point $P$ of $E$, the decomposition groups of the points lying over $P$ have order $2$. Since the ramification index of one of the points of $X$ lying over $P$ is $2$, all points of $\overline{X}$ lying over $P$ have ramification $2$. It also follows that there is a unique point of $X'$ lying over $P$. It has ramification index $2$.
\item[$c)$] 
Let $P$ be a $C$-point of $E$. Since the order of the inertia group of any of the points of $\overline{X}$ lying over $P$ has order divisible by $3$, the same is true for a point $P'$ of $X'$ lying over $P$. It follows that $\overline{X} \to X'$ is a cyclic degree $3$ covering that is ramified at $P'$. Therefore, by class field theory, the multiplicative group of the residue field of $P'$ must have order divisible by $3$. It follows that $P$ must be inert in $X'$. Indeed, in this case the residue field of $P'$ is $\IF_{4}$.\vspace{-0.5cm}
\end{enumerate}
\end{proof}

\begin{lemma}\label{genusx'}
The curve $X'$ is defined over $\IF_2$ and has genus $g'=6-c$. Moreover, the covering $X' \to E$ is ramified exactly at the $B$-points of $E$.
\end{lemma}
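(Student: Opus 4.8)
The plan is to read off the three assertions — that $X'$ descends to $\IF_2$, that it ramifies over $E$ exactly at the $B$-points, and that its genus is $6-c$ — from the splitting data already recorded in Lemma \ref{abcPoints}, supplemented by a Riemann--Hurwitz computation and one local comparison of differents.

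First I would settle the ramification locus and the descent. By Lemma \ref{abcPoints} an $A$-point of $E$ splits completely in $X'$, each $B$-point has a single point of $X'$ above it with ramification index $2$, and each $C$-point is inert in $X'$; in particular, among the rational points of $E$, exactly the $B$-points ramify in $X'$. Since $X\to E$ is unramified outside $E(\IF_2)$ by Lemma \ref{nonnormal6}, its Galois closure $\overline{X}\to E$ has the same ramification locus, and hence so does the intermediate cover $X'\to E$; therefore $X'\to E$ ramifies exactly at the $B$-points of $E$. Because every case in Table \ref{typesIII} has $b\ge 1$, the quadratic cover $X'\to E$ is genuinely ramified. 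Now a separable quadratic extension of $\IF_2(E)$ is either geometric or equal to the constant-field extension $\IF_4\cdot\IF_2(E)$ (the latter since $\IF_4\not\subseteq\IF_2(E)$, $E$ being geometrically irreducible); as the constant-field extension is everywhere unramified, it is excluded, so $\IF_2$ is the exact field of constants of $\IF_2(X')$ and $X'$ is defined over $\IF_2$.

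Next I would compute the genus by Riemann--Hurwitz applied to the degree-$2$ cover $X'\to E$. As $g(E)=1$, this gives $2g'-2=\deg\mathfrak{d}_{X'/E}=\sum_{P}d'_P$, the sum running over the $B$-points $P$, where $d'_P\ge 2$ is the (wild, since $\mathrm{char}=2$) different exponent at the unique ramified point of $X'$ over $P$. To evaluate this sum I would compare it with the analogous data for $X\to E$. Riemann--Hurwitz for the degree-$3$ cover $X\to E$ gives $\deg\mathfrak{d}_{X/E}=10$, exactly as used in Lemma \ref{nonnormal6}: the $A$-points contribute nothing, each $C$-point is tamely totally ramified of index $3$ and contributes $e-1=2$, and each $B$-point contributes the different exponent $d^X_P$ of its single wildly ramified point. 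Hence $\sum_{P}d^X_P=10-2c$, the sum again over the $B$-points.

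The crux, and the step I expect to be the main obstacle, is the identity $d'_P=d^X_P$ at each $B$-point $P$. I would prove it by a local analysis inside $\overline{X}$, whose Galois group over $E$ is $S_3$ (Definition \ref{defioverlineprime}). Over a $B$-point the three points of $\overline{X}$ each have ramification index $2$ and residue degree $1$, so the inertia group $I$ at such a point is an order-$2$ (transposition) subgroup of $S_3$; it meets $A_3$ trivially, and at the wildly ramified point of $X$ it also meets the relevant subgroup $\mathbb{Z}_2=\Gal(\overline{X}/\IF_2(X))$ trivially, since two distinct order-$2$ subgroups of $S_3$ intersect trivially. Consequently $\overline{X}\to X'$ and $\overline{X}\to X$ are unramified (indeed split with residue degree $1$) over the relevant points, so the completion of $X'$ at its point over $P$ and the completion of $X$ at its wildly ramified point over $P$ both coincide with the local degree-$2$ extension of $E$ cut out by $I$. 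Being the same local extension, they share the same different exponent, so $d'_P=d^X_P$. Combining the three paragraphs, $2g'-2=\sum_P d'_P=\sum_P d^X_P=10-2c$, whence $g'=6-c$, as claimed. As a consistency check one may run the same bookkeeping for $\overline{X}\to E$: over each $B$-point the three ramified points contribute $3\,d_P$ and each $C$-point yields a single degree-$2$ place contributing $4$, giving $g(\overline{X})=16-c$, in agreement with the decomposition of the Tate module of $\overline{X}$ under $S_3$ into the pieces attached to the trivial, sign and standard representations.
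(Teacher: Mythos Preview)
Your proof is correct and follows essentially the same route as the paper: both deduce the ramification locus from Lemmas \ref{nonnormal6} and \ref{abcPoints}, argue that the presence of a $B$-point forces the constant field to be $\IF_2$, and obtain $g'=6-c$ by comparing the wild part of $\mathrm{Diff}(X/E)$ with $\mathrm{Diff}(X'/E)$. The one difference is that the paper simply asserts this last equality, whereas you supply the local justification inside $\overline{X}$ (identifying the completions of $X$ and $X'$ at the ramified points over a $B$-point with the same local quadratic extension of $E$); your argument thus makes explicit the step the paper leaves implicit.
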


\begin{proof}
Since $X \to E$ is unramified outside of $E(\IF_{2})$ by Lemma \ref{nonnormal6}, the same holds for the covers $\overline{X}$ and $X'$ of $E$. Lemma \ref{abcPoints} implies then that $X' \to E$ is ramified precisely at the $B$-points of $E$. By Table \ref{typesIII} there is always at least one such point. Thus, since the residue field of any place contains the constant field, the constant field of $X'$ is $\IF_{2}$.
In order to compute the genus of $X'$ we compare the different $\textrm{Diff}(X'/E)$ of $\IF_2(X')/\IF_2(E)$ with the different $\textrm{Diff}(X/E)$ of $\IF_2(X)/\IF_2(E)$. By the Hurwitz formula we have that $10=2\cdot 6-2=\textrm{deg\,Diff}(X/E)=\textrm{deg}\,\textrm{Diff}(X/E)_{tame} + \textrm{deg}\,\textrm{Diff}(X/E)_{wild}$. The contribution given to $\textrm{Diff}(X/E)$ by the $c$ tamely ramified points is $2c$. Therefore the contribution of the $b$ wildly ramified points is $10-2c$. Since these are precisely the points that are ramified in $X' \to E$, the degree of $\textrm{Diff}(X'/E)$ is also equal to $10-2c$. It follows that $2g'-2=10-2c$, so that $g'=6-c$ as required.
\end{proof}

\begin{lemma}\label{XX'} 
For low degrees $d$, the number $a_{d}$ of places of degree $d$ of the curves $\overline{X}$ and $X'$ are as follows:
\begin{align*}
a_1(\overline{X})&=6a+3b,  &a_1(X')&=2a+b;\\
a_2(\overline{X})&=c ,  &a_2(X')&=c;\\
a_3(\overline{X})&=0 ,  &a_3(X')&=0;\\
a_4(\overline{X})&=0 ,  &a_4(X')&=10;\\
a_5(\overline{X})&=0.
\end{align*}
\end{lemma}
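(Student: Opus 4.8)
The plan is to compute each entry of the table by analyzing, degree by degree, how the places of $E$ of a given degree decompose in the covers $X'/E$ and $\overline{X}/E$. Two regimes appear. For $d\le 3$ the only contributing places of $E$ are the rational ones, whose splitting is already recorded in Lemma \ref{abcPoints}, together with the vanishing $a_2(E)=a_3(E)=0$ from Remark \ref{remg1N5}. For $d=4,5$ the contributing places of $E$ have degree $\ge 4$, hence lie outside $E(\IF_{2})$ and are therefore unramified in $\overline{X}$ and $X'$ by Lemma \ref{nonnormal6} (as already noted in the proof of Lemma \ref{genusx'}); here I would exploit the Artin symbol in $\mathrm{Gal}(\overline{X}/E)\simeq S_3$ together with the known vector $a(X)$ of Theorem \ref{RealWeil}.

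For the rational-point counts I would read off Lemma \ref{abcPoints}: each $A$-point contributes $6$ points to $\overline{X}$ and $2$ to $X'$; each $B$-point contributes $3$ and $1$ respectively (in each case the relation $efg=[\overline{X}:E]$ forces residue degree $1$); and each $C$-point contributes no rational point. This gives $a_1(\overline{X})=6a+3b$ and $a_1(X')=2a+b$. A place of degree $2$ of $\overline{X}$ or $X'$ must lie over a place of $E$ of degree $1$ or $2$; since $a_2(E)=0$ only rational points occur, and among these only the $C$-points produce a degree-$2$ place, namely the unique place above a $C$-point in $\overline{X}$ and the inert place with residue field $\IF_{4}$ in $X'$. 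Hence $a_2(\overline{X})=a_2(X')=c$. Likewise a degree-$3$ place would sit over a place of $E$ of degree $1$ or $3$; as $a_3(E)=0$ and rational points yield only places of degree $1$ or $2$, both degree-$3$ counts vanish.

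The crux is the degree-$4$ and degree-$5$ rows. For an unramified place $Q$ of $E$ the decomposition of $Q$ in $X$, $X'$, $\overline{X}$ is governed by the conjugacy class of $\mathrm{Frob}_Q\in S_3$ acting on the cosets $S_3/S_2$ (degree $3$, for the non-normal cover $X$), $S_3/A_3$ (degree $2$, for $X'$) and $S_3/\{1\}$ (degree $6$, for $\overline{X}$). Concretely, a degree-$4$ place $Q$ of $E$ contributes $3$, $1$, or $0$ places of degree $4$ to $X$ according as $\mathrm{Frob}_Q$ is trivial, a transposition, or a $3$-cycle (the orbit sizes on the three cosets being $1,1,1$, then $1,2$, then $3$). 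Since $a_4(X)=0$, every one of the five degree-$4$ places of $E$ must have $3$-cycle Frobenius. Being even, such a Frobenius lies in $A_3$, so $Q$ splits in $X'$ into two degree-$4$ places, yielding $a_4(X')=2\,a_4(E)=10$; being nontrivial, it prevents complete splitting in $\overline{X}$, so together with the $d\le 3$ analysis $a_4(\overline{X})=0$. The same mechanism applied with $a_5(X)=2$ shows that no degree-$5$ place of $E$ can have trivial Frobenius, since that alone would force $a_5(X)\ge 3$; hence none splits completely in $\overline{X}$ and $a_5(\overline{X})=0$.

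The main obstacle is the passage from the single datum $a_4(X)=0$ to the full splitting type of the degree-$4$ places of $E$: one must verify that the three Frobenius classes are distinguished by their contribution to $a_4(X)$, and that the coset action $S_3/S_2$ attached to the non-normal cover $X/E$ is the correct one for reading off that contribution. Once this Frobenius bookkeeping is in place, the degree-$5$ row and the vanishing of $a_4(\overline{X})$ and $a_5(\overline{X})$ follow from the same cycle-type table, while the remaining entries are immediate from Lemma \ref{abcPoints}.
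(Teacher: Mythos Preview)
Your proposal is correct and follows essentially the same route as the paper. The only difference is one of phrasing: where you invoke the conjugacy class of $\mathrm{Frob}_Q$ in $S_3$ and its action on the cosets $S_3/S_2$, the paper simply observes that the decomposition group of an unramified degree-$4$ place is cyclic and hence, given $a_4(X)=0$, must have order~$3$ --- which is exactly your ``$3$-cycle Frobenius'' conclusion. The deductions $a_4(X')=10$, $a_4(\overline{X})=0$, and $a_5(\overline{X})=0$ then proceed identically in both arguments.
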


\begin{proof}
The computation of the numbers $a_1(\overline{X})$ and $a_1(X')$ of $\IF_2$-rational points of $\overline{X}$ and $X'$ respectively, follows directly by Lemma \ref{abcPoints}. By the same Lemma the degree $2$ places of $X'$ are precisely the ones lying over the $C$-points of $E$ and they are themselves totally ramified in $\overline{X}$. This gives $a_2(X')=c =a_2(\overline{X})$. Because of Theorem \ref{RealWeil}, the curve $X$ has parameters $a(X)=[10, 0, 0, 0, 2, 15, ...]$ and in particular $a_{3}(X)=0$. Since also $a_{3}(E)=0$, it follows at once that $a_3(X')=a_3(\overline{X})=0$. The curve $X$ has no places of degree $2$ or $4$, thus $a_4(\overline{X})=0$. Moreover this means that the five places of degree $4$ of $E$ are inert in $X$. Since they are not ramified, their decomposition group has to be cyclic and hence of order $3$. Therefore they are split in $X'$ and we have 
\[
a_4(X')=2a_4(E)=2\cdot 5=10. 
\]
Suppose that $a_5(\overline{X})$ is not zero, then one of the places of degree $5$ of $E$ splits completely in $\overline{X}$. This implies that $X$ has at least three places of degree $5$, which is not the case. Therefore $a_5(\overline{X})=0$.
\end{proof}

The following Lemma describes abelian extensions $K_{D}$ of $\IF_{2}(E)$ for particular choices of the conductor $D$. These extensions play a role in the proof of Proposition \ref{possibleX}. The divisor $D$ is a sum of points in $E(\IF_{2})$. See Remark \ref{remg1N5} for the notation. 

\begin{lemma}\label{rayclass2}
Let $K_{D}$ denote the ray class field of $\IF_{2}(E)$ of conductor $D$ in which the point at infinity and all places of degree $4$ of $E$ split completely. Then $K_{D}$ is trivial when $D=4P_{1}+2P_{2}+2P_{3}$ or $D=2P_{1}+2P_{2}+4P_{3}$. It has degree $2$ over $\IF_{2}(E)$ when $D=2P_{1}+4P_{2}+2P_{3}$.
\end{lemma}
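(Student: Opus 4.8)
The plan is to follow the template of the proof of Lemma \ref{lemmag5}. By Artin reciprocity, $\Gal(K_{D}/\IF_{2}(E))$ is the $S$-ray class group of $E$ modulo $D$, where $S=\{P_{0},Q_{1},\ldots ,Q_{5}\}$ consists of the point at infinity together with the five degree $4$ places of Remark \ref{remg1N5} (the places required to split). Since $E(\IF_{2})\simeq \mathbb{Z}/5\mathbb{Z}$, and since by Riemann--Roch no degree $4$ place is linearly equivalent to $4P_{0}$ (so that $[Q_{1}-4P_{0}]\neq 0$ and, $\mathbb{Z}/5\mathbb{Z}$ being simple, the places of $S$ generate the class group), this $S$-ray class group is the quotient of
$$
R\;\simeq\;\big(\IF_{2}[[t_{1}]]/(t_{1}^{n_{1}})\big)^{*}\times\big(\IF_{2}[[t_{2}]]/(t_{2}^{n_{2}})\big)^{*}\times\big(\IF_{2}[[t_{3}]]/(t_{3}^{n_{3}})\big)^{*}
$$
by the image of the $S$-unit group of $E$, exactly as in \cite[Section 8]{schoof}, \cite[Section 2.5]{niederreiterxing}. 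Here $(n_{1},n_{2},n_{3})$ is $(4,2,2)$, $(2,2,4)$ or $(2,4,2)$; in each case $\#R=8\cdot 2\cdot 2=32$, the factor of order $8\simeq\mathbb{Z}/4\times\mathbb{Z}/2$ (generated by $1+t$ of order $4$ and $1+t^{3}$ of order $2$) sitting at the point of multiplicity $4$. I would take $t_{1}=x$ at $P_{1}=(0,0)$, $t_{2}=x$ at $P_{2}=(0,1)$ and $t_{3}=x+1$ at $P_{3}=(1,0)$.

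Next I would write down $S$-units and reduce them modulo $D$. The $S$-units that are polynomials in $x$ are generated by $f_{1}=x^{4}+x^{3}+x^{2}+x+1$, $f_{2}=f_{1}(x+1)=x^{4}+x^{3}+1$ and $f_{5}=x^{2}+x+1$, with divisors $Q_{1}+Q_{2}-8P_{0}$, $Q_{3}+Q_{4}-8P_{0}$ and $Q_{5}-4P_{0}$; their expansions are immediate (for instance $f_{5}\equiv 1+t+t^{2}$ at all three points). However one checks that in $R$ one has $f_{5}=f_{1}f_{2}$, so $\langle f_{1},f_{2},f_{5}\rangle$ spans only a subgroup of order $8$ and cannot produce the claimed cokernels. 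The reason is structural: $P_{1}$ and $P_{2}$ share the $x$-coordinate $0$, so every function of $x$ alone has identical expansion at $P_{1}$ and $P_{2}$. The $S$-unit group has rank $5$, and the two missing generators are functions separating the degree $4$ places lying over a common $x$-coordinate, namely the pairs $\{Q_{1},Q_{2}\}$ and $\{Q_{3},Q_{4}\}$.

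Because $[Q_{1}-Q_{2}]=2[Q_{1}-4P_{0}]\neq 0$ in $E(\IF_{2})$, no such separating function is a low-degree polynomial in $x$: it necessarily involves $y$ (an admissible divisor supported on $S$ is, e.g., $2Q_{1}+Q_{3}-12P_{0}$), and the second separating unit can be obtained from the first by applying the involution $(x,y)\mapsto(x,y+1)$, which fixes $P_{0}$ and $Q_{5}$ and swaps $Q_{1}\leftrightarrow Q_{2}$, $Q_{3}\leftrightarrow Q_{4}$. With these two units in hand I would compute all local expansions modulo $t_{i}^{\,4}$ at $P_{1},P_{2},P_{3}$ and then carry out the linear algebra in the $2$-group $R$, reading off the order of the cokernel. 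I expect the outcome to be that the two extra units enlarge the image to all of $R$ when the multiplicity $4$ sits at $P_{1}$ or at $P_{3}$ (the conductors $4P_{1}+2P_{2}+2P_{3}$ and $2P_{1}+2P_{2}+4P_{3}$), giving a trivial $K_{D}$, while they only reach an index $2$ subgroup when it sits at $P_{2}$ (the conductor $2P_{1}+4P_{2}+2P_{3}$), giving $\Gal(K_{D}/\IF_{2}(E))\simeq\mathbb{Z}/2$; heuristically, the $y$-dependent units carry exactly the information distinguishing $P_{1}=(0,0)$ from $P_{2}=(0,1)$, which enters differently according to the position of the multiplicity $4$.

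The main obstacle is precisely the explicit determination of these two $y$-dependent $S$-units and of their expansions to order $t^{3}$ at the three points: this is the analogue of the explicit $S$-unit computation in Lemma \ref{lemmag5}, but genuinely heavier because $S$ is built from degree $4$ places, and it is the natural point at which a \textsc{Magma} computation is convenient. It is also exactly this step that produces the asymmetry singling out the conductor $2P_{1}+4P_{2}+2P_{3}$; everything before it (the reduction to $R/(S\text{-units})$, the structure of $R$, and the visible units $f_{1},f_{2},f_{5}$) is uniform in the three cases.
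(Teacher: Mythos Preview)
Your plan coincides with the paper's proof. The two $y$-dependent $S$-units you anticipate are supplied there explicitly as
\[
u_{4}=\frac{(y+x^{3})(y+x^{3}+x^{2})^{2}}{y(y+x)(x^{2}+x+1)^{3}},\qquad
u_{5}=\frac{(y+x^{3})^{2}(y+x^{3}+x^{2}+1)}{(y+1)(y+x)(x^{2}+x+1)^{3}},
\]
with divisors $Q_{1}+2Q_{3}-3Q_{5}$ and $Q_{4}+2Q_{1}-3Q_{5}$ (supported on the $Q_{i}$ alone rather than on $P_{0}$, and not quite an involution pair as you guessed); their images in each $R_{j}$ are then tabulated via the local expansions of $y$ at $P_{1},P_{2},P_{3}$, and the cokernel orders $1,2,1$ fall out exactly as you predict. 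One small correction: your Riemann--Roch claim is false as stated, since $x^{2}+x+1$ witnesses $Q_{5}\sim 4P_{0}$; but $[Q_{1}-4P_{0}]\neq 0$ does hold, and that is all your argument actually uses.
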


\begin{proof}
Let $Q_1,Q_2,\ldots, Q_5$ denote the degree $4$ places of $E$ as listed in Remark \ref{remg1N5} and let $S=\{ Q_1,Q_2,Q_3,Q_4,Q_5,P_0\}$. A basis for the $S$-unit group of $E$ is given by the following functions $u_{i}$,  $i=1,\ldots,5$:
\begin{align*}
u_1  &=  x^4+x^3+x^2+x+1, & \textrm{with } & (u_{1})  =  Q_1+Q_2-8P_0,\\
u_2  &=  x^4+x^3+1,  & \textrm{with } & (u_{2})  =  Q_3+Q_4-8P_0,\\
u_3  &=  x^2+x+1, & \textrm{with } & (u_{3}) = Q_5-4P_0,\\
u_4  &=  \frac{(y+x^3)(y+x^3+x^2)^2}{y(y+x)(x^2+x+1)^3}, & \textrm{with } &(u_4) = Q_1+2Q_3-3Q_5,\\
u_5  &=  \frac{(y+x^3)^2(y+x^3+x^2+1)}{(y+1)(y+x)(x^2+x+1)^3},  & \textrm{with } & (u_{5}) = Q_4+2Q_1-3Q_5.
\end{align*}
Then consider the ray class field $K_{D'}$ of $\IF_{2}(E)$ of conductor $D'=4P_{1}+4P_{2}+4P_{3}$ in which the places in $S$ split completely. We are interested in the ray class fields $K_{D_{j}}$, $j=1,2,3$, that are subfields of $K_{D'}$ of conductor $D_{1}=4P_{1}+2P_{2}+2P_{3}$, $D_{2}=2P_{1}+4P_{2}+2P_{3}$ and $D_{3}=2P_{1}+2P_{2}+4P_{3}$. The corresponding $S$-ray class groups modulo $D_{j}$ are quotients of the groups $R_{j}=\Big(\mathbb{F}_2[[t_j]]/(t_j^4)\Big)^*\oplus \Big(\mathbb{F}_2[[{t_{j'}}]]/(t_{j'}^2)\Big)^*\oplus \Big(\mathbb{F}_2[[{t_{j''}}]]/(t_{j''}^2)\Big)^*\simeq \mathbb{Z}_4 \times \mathbb{Z}_2 \times \mathbb{Z}_2\times \mathbb{Z}_2$ by the image of the $S$-unit group of $E$ \cite[Section 8]{schoof}. Here $t_{j}$, $t_{j'}$, $t_{j''}$ denote uniformizers of $P_{j}$, $P_{j'}$, $P_{j''}$ respectively, for $\{j,j',j''\}=\{1,2,3\}$. We show that the order of the $S$-ray class group modulo $D_{j}$ is $2$ for $j=2$, while for $j=1,3$ this group is trivial. In Table \ref{uiPj}, we display in the column marked by $R_{j}$, $j=1,2,3$, the images of the $u_{i}$'s ($i\neq 3$) in the group $R_{j}$. We remark that the computations for the units $u_{4}$ and $u_{5}$ can be performed calculating the local expansions $y_j$ of $y$ at $P_j$, for $j=1,2,3$:
\begin{eqnarray*}
y_{1}&=&x+x^2+x^3+x^4+x^6+O(x^7),\\
y_{2}&=&1+x+x^2+x^3+x^4+x^6+O(x^7),\\
y_{3}&=&t^2+t^3+t^4+t^6+O(t^7),\, \textrm{where} \, t=x+1.
\end{eqnarray*}

\begin{table}[h]
\begin{center}
\begin{tabular}{| c | c | c | c |}
\hline
$u_{i}$ & $R_{1}$ & $R_{2}$ & $R_{3}$ \\
\hline
$u_{1}$ & $(1+t_1)^{3}(1+t_2)$ &$(1+t_1)(1+t_2)^3$ & $(1 + t_1) (1 + t_3^3)(1+t_2)$\\
\hline
$u_{2}$ & $(1+t_1^3)(1+t_3)$ & $(1+t_2^3)(1+t_3)$ & $(1+t_3)^3 $ \\
\hline
$u_{4}$ & $(1+t_1)^2(1+t_1^3)(1+t_2)$ & $(1+t_2)^3(1+t_2^3)$ & $1+t_2$\\
\hline
$u_{5}$ & $1+t_3$ & $(1+t_2^3)(1+t_3)$ & $(1+t_3)^3(1+t_3^3)$ \\
\hline
\end{tabular}
\caption{Images of the $u_{i}$'s in the group $R_{j}$, for $j=1,2,3$.}
\label{uiPj}
\end{center}
\end{table}

One checks that in $R_{2}$ the images of the $u_i$'s for $i\neq 3$ generate a subgroup of index $2$. The image of $u_{3}$ is $(1+t_1)(1+t_2)^3(1+t_2^3)(1+t_3)$ and lies hence in the same subgroup. On the other hand, the images of the $u_i$'s, $i\neq 3$, are independent generators of $R_{1}$: the image of $u_{1}$ has order $4$ and the images of $u_2$, $u_4$ and $u_5$ have order $2$. Thus in this case the ray class group is trivial. Similarly for the images of $u_1$, $u_2$, $u_4$ and $u_5$ in $R_{3}$: also in this case the ray class group is trivial.
\end{proof}

\begin{proposition}\label{possibleX}
All rational points of $E$ are ramified in $X'$. The curve $X'$ has genus $6$ and real Weil polynomial $h(t)=t(t+2)(t^2-5)^2$. In other words, only the configuration of case $I$ in Table \ref{typesIII} is possible.
\end{proposition}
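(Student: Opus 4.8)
The plan is to eliminate cases $II$ and $III$ of Table \ref{typesIII} and then to compute the invariants of $X'$ in the surviving case $I$. The input common to all three cases is that $X'\to E$ is an abelian (quadratic) cover which, by Lemmas \ref{genusx'} and \ref{XX'}, is ramified exactly at the $B$-points of $E$ (wildly, since the characteristic is $2$, so each ramified point contributes an even conductor exponent at least $2$), splits at every $A$-point as well as at all five degree-$4$ places of $E$, and is inert at every $C$-point. Throughout I would normalize the distinguished rational points by means of the automorphisms of Remark \ref{remg1N5}: since $\sigma$ is transitive on $E(\IF_2)$ and $\tau$ fixes $P_0$ and permutes the remaining four points in a single cycle, one may move any prescribed point to $P_0$ and then any second point to $P_4$.

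I would first dispose of case $III$ by a direct computation of the Zeta function. Here $a(X')=[5,2,0,10,\dots]$ and, since $g'=6-c=4$, the four numbers $a_1,\dots,a_4$ already determine the degree-$4$ real Weil polynomial $h_{X'}$ through the point counts $\#X'(\IF_{2^n})$ for $n\le 4$; a short application of Newton's identities gives $h_{X'}(t)=t^4+2t^3-4t^2-8t+4$. But $X'$ admits a nonconstant map to $E$, so $\mathcal{J}ac(E)$ is an isogeny factor of $\mathcal{J}ac(X')$ and $(t+2)$ must divide $h_{X'}(t)$. As $h_{X'}(-2)=4\neq 0$, this is impossible and case $III$ is excluded.

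Case $II$ is the crux, and I expect the genuine difficulty to lie here. The same point-count computation now determines $h_{X'}$ (of degree $g'=5$) only up to its last coefficient, and imposing $(t+2)\mid h_{X'}$ yields the perfectly admissible candidate $h_{X'}(t)=(t+2)(t^2-2)(t^2-5)$; thus the cheap numerical obstruction used for case $III$ is unavailable and class field theory is forced upon us. The different of $X'/E$ has degree $10-2c=8$ and is supported on the three $B$-points, which pins the exponents down to the pattern $\{2,2,4\}$. Normalizing the unique $A$-point to $P_0$ and the unique $C$-point to $P_4$, so that the $B$-points are $P_1,P_2,P_3$, the conductor of $X'/E$ becomes one of $D_1,D_2,D_3$ of Lemma \ref{rayclass2}; since $X'$ splits $P_0$ and all degree-$4$ places, $X'$ sits inside the associated ray class field. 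Lemma \ref{rayclass2} eliminates $D_1$ and $D_3$ at once, because the corresponding ray class fields are trivial, leaving only $D_2$, for which $K_{D_2}/\IF_2(E)$ is quadratic and hence equals $X'$. The decisive and most delicate step is then to determine the Frobenius of $P_4$ in $K_{D_2}$: I expect it to split, in which case $P_4$ is a second $A$-point and $a\geq 2$, contradicting $a=1$. Carrying out this last Frobenius computation, which refines the $S$-unit calculation already set up in Lemma \ref{rayclass2}, is the main obstacle.

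Once cases $II$ and $III$ are ruled out we are necessarily in case $I$, where $a=0$, $b=5$, $c=0$: every rational point of $E$ is a $B$-point and hence ramifies in $X'$, the different has degree $10$ so that the conductor is $2\sum_{i=0}^{4}P_i$, and $g'=6-c=6$. For the real Weil polynomial I would not work with $X'$ directly, since $a_1,\dots,a_4$ leave its top coefficients undetermined; instead I would exploit the Galois closure $\overline{X}$. In case $I$ the place counts of Lemma \ref{XX'} give $\#\overline{X}(\IF_{2^n})=15$ for all $1\le n\le 5$, hence the first five power sums of the Frobenius eigenvalues of $\overline{X}$. Decomposing the $S_3$-cover one has $h_{\overline{X}}=(t+2)\,h_\epsilon\,h_\rho^2$, where $h_\rho=h_X/(t+2)=(t-1)(t^2+3t+1)^2$ is the known standard-representation factor and $h_\epsilon$ is the unknown degree-$5$ sign-representation factor, equal to the real Weil polynomial of the Prym of $X'/E$. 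Subtracting the contribution of the known factors from the five power sums determines $h_\epsilon$ uniquely; the outcome is $h_\epsilon=t(t^2-5)^2$, whence $h_{X'}=(t+2)h_\epsilon=t(t+2)(t^2-5)^2$, as asserted.
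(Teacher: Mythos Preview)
Your treatment of case $III$ is correct and in fact cleaner than the paper's: you compute the putative real Weil polynomial of $X'$ directly from $a_1,\dots,a_4$ and observe that $(t+2)\nmid h_{X'}$, whereas the paper appeals to the tables in \cite{geervlugt} for $N_{16}(4)=45<49$. Your approach to case $I$ via the representation-theoretic decomposition $h_{\overline{X}}=(t+2)\,h_\epsilon\,h_\rho^2$ is also sound and gives a transparent route to $h_{X'}=t(t+2)(t^2-5)^2$; the paper simply records this as the output of a computer search over admissible degree-$6$ polynomials.

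The gap is in case $II$. Your argument hinges on the expectation that $P_4$ splits in $K_{D_2}$, but this is false: $P_4$ is \emph{inert}. Concretely, the function $f=xy+x^2+x+1$ has divisor $5P_4-5P_0$ and is a unit at $P_1,P_2,P_3$; expanding at the conductor primes one finds $f\equiv 1+t_1\pmod{t_1^2}$, $f\equiv 1+t_2^3\pmod{t_2^4}$, $f\equiv 1+t_3\pmod{t_3^2}$, so its image in $R_2$ is $(0,1,1,1)$ in the coordinates of the proof of Lemma~\ref{rayclass2}. Under the index-$2$ character $(a,b,c,d)\mapsto a+b+c+d\bmod 2$ this is nontrivial, whence $\mathrm{Frob}_{P_4}$ is the nontrivial element of $\Gal(K_{D_2}/\IF_2(E))$. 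Thus $K_{D_2}$ has exactly the parameters $a(X')=[5,1,0,10,\dots]$ predicted for case $II$, and no contradiction arises at the level of $X'$.

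This is precisely why the paper must go one step higher: it identifies $X'$ with $K_{D_2}$, then observes that $\overline{X}\to X'$ would be a cyclic degree-$3$ cover of conductor $P_4'$ (the unique degree-$2$ place of $X'$, lying over $P_4$) in which all five rational points of $X'$ split, and a MAGMA computation shows the corresponding ray class group is trivial. Your proposal lacks any substitute for this step, so case $II$ remains unexcluded.
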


\begin{proof}
According to Table \ref{typesIII}, there are three possibilities for the splitting behavior of the rational points of $E$ in $X$. Moreover by Lemmas \ref{genusx'} and \ref{XX'} the genus $g'$ and the vector $a(X')$ of the curve $X'$ are in the three cases as follows:
\begin{equation*}
\left.
\begin{tabular}{|l|c|c|c||c|c|}
 \hline
 $ \,$   & $\mathbf{a}$ & $\mathbf{b}$ & $\mathbf{c}$ & $\mathbf{g'}$ & $\mathbf{a(X')}$\\
\hline
\textbf{case } $I$ & $0$ & $5$ & $0$ & $6$ & $[5,0,0,10,\ldots]$\\
\hline
\textbf{case } $II$ & $1$ & $3$ & $1$ & $5$ &  $[5,1,0,10,\ldots]$\\
\hline
\textbf{case } $III$ & $2$ & $1$ & $2$ & $4$ & $[5,2,0,10,\ldots]$\\
\hline
\end{tabular}
\right.
\end{equation*}
Case $III$ cannot occur since in this case the curve $X'$ would be a genus $4$ curve having $N_4=N+2a_2+4a_4=5+2\cdot 2+4\cdot 10=49$ rational points over $\IF_{2^4}$, while $N_4(4)=45$ according to \cite{geervlugt}.\\ 
In case $II$ a computer calculation gives only one possible real Weil polynomial for $X'$, namely $h(t)=(t+2)(t^2-5)(t^2-2)$. Now, since the automorphism group of $E$ acts doubly transitively on $E(\IF_{2})$ as described in Remark \ref{remg1N5}, we may assume that the point at infinity $P_{0}$ of $E$ is the unique $A$-point of $E$ and that $P_{4}=(1,1)$ is the unique $C$-point. The remaining three rational points of $E$ are $\{P_{1}, P_{2}, P_{3}\}$. They are $B$-points of $E$ and hence ramify in $X' \to E$ by Lemma \ref{abcPoints}. Moreover, since $a_{4}(X')=10$ and $a_{4}(E)=5$, all five degree $4$ places of $E$ split completely in $X'$. By the Hurwitz formula the degree of the different of $\IF_{2}(X')/\IF_{2}(E)$ is $8$. Therefore Lemma \ref{rayclass2} implies that $\IF_{2}(X')$ is equal to the ray class field of $\IF_{2}(E)$ of conductor $2P_{1}+4P_{2}+2P_{3}$, in which $P_{0}$ and all degree $4$ places of $E$ split completely. Consider now the curve $\overline{X}$. It is a degree $3$ abelian covering of $X'$. Since $\overline{X}$ has $15$ rational points by Lemma \ref{XX'}, all five rational points of $X'$ split completely in $\overline{X}$. Moreover, since $X\to E$ and $X'\to E$ are both unramified outside of $E(\IF_{2})$, only the degree $2$ place $P_4'$ of $X'$, which lies over $P_4$ of $E$ ramifies in $\overline{X}$.
The curve $\overline{X}$ is hence the ray class field of $\IF_2(X')$ of conductor $P_4'$, where all rational places of $X'$ split completely. A computer calculation with MAGMA shows that the associated ray class group is trivial. Hence case $II$ cannot occur.\\
In case $I$ a computer calculation gives only one possible real Weil polynomial for $X'$, namely $h(t)=t(t+2)(t^2-5)^2$.
\end{proof}

In the next two lemmas we describe two curves appearing in the proof of Proposition \ref{Xb}.

\begin{lemma}\label{unhyper}
There exists a unique curve $C$ having real Weil polynomial $h(t)=(t+2)(t-1)$. Up to $\IF_{2}$-isomorphism, this is a genus $2$ projective curve described by the affine equation $y^2+xy=x^5+x^4+x^2+x$.
\end{lemma}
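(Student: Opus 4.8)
The plan is first to read off the invariants of $C$ from $h$, then to realize $C$ as a hyperelliptic double cover of $\IP^1$ and pin down the ramification, and finally to reduce to a normal form and isolate the equation. Since $h(t)=(t+2)(t-1)$ has degree $2$, the curve $C$ has genus $2$. From the roots $\mu_1=-2$, $\mu_2=1$ one computes $N_1=\#C(\IF_2)=4$, $N_2=8$, $N_3=10$ (via $N_n=2^n+1-\sum_i(\alpha_i^n+\overline{\alpha_i}^n)$ and the recurrence $t_n=\mu t_{n-1}-2t_{n-2}$), so that $a(C)=[4,2,2,\dots]$: there are four rational points, two places of degree $2$ and two of degree $3$. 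Every genus $2$ curve over $\IF_2$ is hyperelliptic, so $C$ admits a separable degree $2$ map $\pi\colon C\to\IP^1$, and by the Hurwitz formula the different of $\IF_2(C)/\IF_2(x)$ has degree $2g-2+4=6$.

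Next I would determine the splitting type of the places of $\IP^1$ of degree $\le 3$. Writing $s,r,i$ for the number of rational points of $\IP^1$ that split, ramify and are inert, the relations $2s+r=a_1=4$ and $s+r+i=3$ give $s=i+1$, leaving only $(s,r,i)=(1,2,0)$ or $(2,0,1)$. The ramification here is wild, so at a ramified place the break $u$ is prime to $2$, hence odd, and the different exponent $u+1$ is even. In the configuration $(2,0,1)$ the single inert rational point accounts for one degree $2$ place, so to reach $a_2=2$ the degree $2$ place of $\IP^1$ must ramify; since a ramified degree $3$ place would contribute more than $6$ to the different, the whole different would come from this degree $2$ place, forcing different exponent $3$ there, which is odd -- a contradiction. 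Hence $(s,r,i)=(1,2,0)$: exactly one rational point splits and two ramify, with even different exponents summing to $6$, i.e. $\{2,4\}$. Matching $a_2=2$ and $a_3=2$ then forces the degree $2$ place of $\IP^1$ to split and exactly one of the two degree $3$ places to split (the other being inert), all of these contributing nothing to the different.

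Using $\mathrm{Aut}(\IP^1)\cong S_3$ I would normalize the split rational point to $x=1$ and the two ramified points to $x=0$ and $x=\infty$, placing the point of larger different exponent at $\infty$. The two rational branch points force the coefficient $a(x)$ in a hyperelliptic model $y^2+a(x)y=b(x)$ to vanish simply at one of them and to have a pole at the other; by the Maisner--Nart classification \cite[page 327]{maisnernart} one may take $a(x)=x$ and $\deg b=5$, so $C\colon y^2+xy=b(x)$. Reducing $b$ modulo the Artin--Schreier relation $b\sim b+g^2+xg$ and the residual automorphisms fixing the configuration leaves finitely many candidates, and imposing the required point counts $a_1=4$, $a_2=2$, $a_3=2$ (equivalently, the prescribed splitting) singles out $b(x)=x^5+x^4+x^2+x$; equivalently, $\IF_2(C)$ is the unique degree $2$ ray class field of $\IF_2(x)$ of conductor $2P_0+4P_\infty$ in which $x=1$ splits, a fact one can confirm by a ray class group computation as in Lemma \ref{lemmag5}. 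Existence is immediate by checking that the displayed equation has genus $2$ and the correct numbers $N_1,N_2,N_3$.

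The main obstacle is the characteristic $2$ ramification bookkeeping: getting the parity of the different exponents right is what rules out the configuration $(2,0,1)$ and thereby makes the structure rigid, and the subsequent reduction of $b(x)$ to a unique representative -- together with checking that the various normalizing choices yield $\IF_2$-isomorphic curves -- must be carried out carefully.
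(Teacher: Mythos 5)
Your proof is correct and follows essentially the same route as the paper: read off $a(C)=[4,2,\dots]$ from $h$, realize $C$ as a hyperelliptic double cover of $\IP^1$, use the parity of wild different exponents to exclude the split--split--inert configuration and force the different to be $4P+2P'$ at two rational points with the third split, and then isolate the equation among the finitely many Maisner--Nart normal forms. Your extra computation of $a_3$ and the ray class field remark are harmless additions not present in (and not needed for) the paper's argument.
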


\begin{proof}
A curve $C$ having real Weil polynomial $h(t)=(t+2)(t-1)$ is a genus $2$ curve having four rational points and two places of degree $2$ over $\IF_{2}$. Since it is a hyperelliptic curve, we can consider the double covering $C\to \IP^{1}$. The different of the corresponding function field extension is $4P+2P'$, where $P$ and $P'$ are rational points of $\IP^1$. Indeed, by the Hurwitz formula, the degree of the different is $6$ and, since $C$ has four rational points, two of the rational points of $\IP^1$ are wildly ramified and one splits completely. The coefficients of $P$ and $P'$ are forced to be even since $\IF_{2}(C)$ is an Artin-Schreier extension of the rational function field. Notice also that the possibility that two rational points of $\IP^{1}$ split and the third stays inert in $\IF_{2}(C)$ is excluded by the fact that in this case the degree $2$ place of $\IP^{1}$ would be ramified, giving a contradiction in the computation of the different. According to the classification of genus $2$ curves over $\IF_{2}$ in \cite[page 327]{maisnernart}, by taking $P=P_\infty$ and $P'=(0,0)$, any such an hyperelliptic curve over $\IF_2$ is $\IF_2$-isomorphic to a projective curve of affine equation $y^2+y=x^3+ax+1/x+b$, with $a,b \in \IF_2$. There are hence four possibilities for the parameters $a$ and $b$, but only $y^2 + y = x^3 + x+1/x+1$ is the equation of a projective curve having four rational points over $\IF_{2}$ and two places of degree $2$. This curve is $\IF_2$-isomorphic to the projective curve of more simple affine equation $y^2+xy=x^5+x^4+x^2+x$, an isomorphism being given by $(x,y)\mapsto (x,(y+x^2)/x)$.
\end{proof}

\begin{lemma}\label{rayclass3}
Let $C$ be the curve of Lemma \ref{unhyper}. Then $C$ admits an unramified cyclic degree $5$ covering in which both the point at infinity $P_{\infty}$ and the point $(0,0)$ split. This covering is unique up to isomorphism. Moreover, for any other choice of rational points $P$ and $P'$ of $C$, any cyclic unramified degree $5$ covering of $C$ in which $P$ and $P'$ split, is necessarily trivial.
\end{lemma}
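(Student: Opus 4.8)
The plan is to recast the statement in terms of class field theory, exactly as in the proofs of Lemmas \ref{lemmag5} and \ref{rayclass2}. By Artin reciprocity, an unramified abelian covering of $C$ in which a nonempty set $S$ of rational points splits completely has Galois group the $S$-split ray class group of conductor zero, which is
\[
\Gal(H_{S}/\IF_{2}(C)) \;\cong\; \mathrm{Pic}^{0}(C)\big/\big\langle\, [P-P'] : P,P'\in S \,\big\rangle .
\]
Because $S$ consists of degree $1$ places, asking one of them to split completely forces the constant field of the covering to remain $\IF_{2}$: a rational point can never split completely in a nontrivial constant field extension, so the degree $5$ constant field extension is automatically excluded. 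Thus a pair $\{P,P'\}$ admits a nontrivial unramified cyclic degree $5$ covering in which both points split if and only if the above quotient has nontrivial $5$-part, i.e. if and only if the class $[P-P']$ has trivial $5$-part. Everything therefore reduces to locating the rational points of $C$ inside $\mathrm{Pic}^{0}(C)$.

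First I would determine the group. By Lemma \ref{unhyper} the real Weil polynomial of $C$ is $(t+2)(t-1)$, with roots $\mu_{1}=-2$ and $\mu_{2}=1$, so the class number is $\#\mathcal{J}ac(C)(\IF_{2})=\prod_{i}(1-\mu_{i}+2)=5\cdot 2=10$, whence $\mathrm{Pic}^{0}(C)\cong\mathbb{Z}/10\mathbb{Z}$. This group is cyclic with a unique element $t$ of order $2$; its $2$-part is $\{0,t\}$ and its $5$-Sylow subgroup is cyclic of order $5$. Consequently an element has trivial $5$-part exactly when it lies in $\{0,t\}$.

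Next I would place the four rational points $P_{\infty},(0,0),(1,0),(1,1)$ of $C$ relative to the base point $P_{\infty}$. The crucial computation is that the coordinate function $x$ meets $C$ only at the hyperelliptic ramification point $(0,0)$ above $x=0$, so $\mathrm{div}(x)=2(0,0)-2P_{\infty}$; hence $[(0,0)-P_{\infty}]$ is nonzero and killed by $2$, i.e. it is the $2$-torsion class $t$. (This is simply the fact that $(0,0)$ and $P_{\infty}$ are both Weierstrass points, so their difference is $2$-torsion.) On the other hand $(1,0)$ and $(1,1)$ are the two points of the fibre over $x=1$, so from $\mathrm{div}(x-1)=(1,0)+(1,1)-2P_{\infty}$ one gets $[(1,1)-P_{\infty}]=-[(1,0)-P_{\infty}]$; and since $(1,0)$ is linearly equivalent neither to $P_{\infty}$ nor to $(0,0)$, the class $[(1,0)-P_{\infty}]$ lies outside $\{0,t\}$ and therefore has nontrivial $5$-part.

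Finally I would assemble the conclusion. For $S=\{P_{\infty},(0,0)\}$ the quotient $\mathrm{Pic}^{0}(C)/\langle t\rangle$ is cyclic of order $5$, giving a single nontrivial unramified cyclic degree $5$ covering in which both points split; this yields existence and uniqueness. For any other pair of rational points the class $[P-P']$ differs from $[(1,0)-P_{\infty}]$ or $[(1,1)-P_{\infty}]$ only by the $2$-torsion $t$ (and equals $-2[(1,0)-P_{\infty}]$ for the pair $\{(1,0),(1,1)\}$), so it has nontrivial $5$-part; the corresponding ray class group then has trivial $5$-Sylow and admits no degree $5$ quotient, so the only such covering is trivial. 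The one delicate point is the class field theory dictionary — in particular verifying that the splitting condition on a rational point excludes the constant field extension — after which the proof rests on the elementary divisor identity $\mathrm{div}(x)=2(0,0)-2P_{\infty}$.
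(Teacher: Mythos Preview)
Your proposal is correct and follows essentially the same approach as the paper: both arguments use Artin reciprocity to identify unramified abelian coverings in which a rational point splits with quotients of $\mathrm{Pic}^{0}(C)$, compute $\#\mathrm{Pic}^{0}(C)=h(3)=10$ from the real Weil polynomial, and use the principal divisor $(x)=2\big((0,0)-P_{\infty}\big)$ to locate the rational points in the Jacobian. The only cosmetic difference is in the verification that $[(1,0)-P_{\infty}]$ has nontrivial $5$-part: the paper argues explicitly that $2\big((1,0)-P_{\infty}\big)$ cannot be the divisor of a function (any such function would be $x+1$, which also vanishes at $(1,1)$), whereas you invoke the standard fact that distinct points on a curve of positive genus are never linearly equivalent.
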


\begin{proof}
Consider the maximal unramified extension $L$ of the function field $K$ of $C$ where $P_\infty$ splits completely. By class field theory, the Galois group $\Gal(L/K)$ is isomorphic to the quotient of the class group $Pic(C)$ by the subgroup generated by the image in $Pic(C)$ of the Frobenius element $\mathrm{Frob} \,P_{\infty} \in \Gal(L/K)$ of $P_{\infty}$. Hence $\Gal(L/K)\simeq Pic^{0}(C)$. Let $h(t)$ be the real Weil polynomial of $C$ as in Lemma \ref{unhyper}. By \cite[Theorem 5.1.15 (c)]{stichtenoth} the class number $\# Pic^{0}(C)$ of $C$ equals $L(1)$, where $L(t)$ is the numerator of the Zeta function of $C$. Since $L(1) = h(q+1)$ by \eqref{relation}, one has $\# Pic^0(C)= h(3)=10$. Therefore there exists a unique unramified cyclic degree $5$ extension $K'$ of $\IF_{2}(C)$ in which $P_{\infty}$ splits completely. Since the divisor $(x)=2((0,0)- P_\infty)$ is principal, the Frobenius of $(0,0)$ is trivial in $\Gal(K'/K)\simeq\mathbb{Z}_5\simeq Pic^0(C)/\mathbb{Z}_{2}$, so that the rational point $(0,0)$ is also split in $K'$.\\
On the other hand, if we replace the points $P_{\infty}$ and $(0,0)$ by any other pair of rational points of $C$, there is no such unramified cyclic degree $5$ extension. To see this, we note that $C$ has four rational points: $P_{\infty}$, $(0,0)$, $(1,0)$ and $(1,1)$. If two of these were to split in an unramified cyclic degree $5$ covering of $C$, then $2$ times their difference, would be a principal divisor. By adding or subtracting the principal divisors $2((0,0)-P_{\infty})$ and $2((1,0)+(1,1)-2P_{\infty})$, this boils in each case down to the question of whether or not the divisor $2((1,0)-P_{\infty})$ is principal. Suppose that $2((1,0)-2P_{\infty})$ is the divisor of a function $f \in \IF_2(C)$. Since the only functions in $\IF_2(C)$ with a pole of order $2$ at infinity are linear functions in $x$, we must have $f=x+1$, but then $f$ also vanishes in $(1,1)$, a contradiction.
\end{proof}

\begin{proof}[Proof of Proposition \ref{Xb}]
By Lemma \ref{nonnormal6} the genus $6$ curve $X$ is a non-Galois covering of degree $3$ of the elliptic curve $E$. Moreover, by Proposition \ref{possibleX} the only possibility for the splitting behavior of the rational points of $E$ in $X$ is described in case $I$ of Table \ref{typesIII}. In other words, all rational points of $E$ are $B$-points in the sense of Definition \ref{abcpointsE}. In order to show that such a curve $X$ is unique, consider the quadratic function field extension $\IF_{2}(X')/\IF_{2}(E)$ described in the picture of Definition \ref{defioverlineprime}. By the Hurwitz formula and Proposition \ref{possibleX}, this is an abelian extension of $\IF_{2}(E)$ of conductor $\sum_{i=0}^4 2 P_i$ where all places of $E$ of degree $4$ split completely.\\
Let $\tau$ be the order $4$ automorphism of $E$ described in Remark \ref{remg1N5}. Then the endomorphism $\tau+2$ of the elliptic curve $E$ has degree $5$ and kernel $E(\IF_{2})$. The Galois group of the covering $\tau +2:\, E \to E$ consists of the translations by the points $P_{i}$ of $E$. It preserves both the set $E(\IF_{2})$ and the set of places of $E$ of degree $4$. Therefore the covering
\[
X' \longrightarrow E \stackrel{\tau +2}{-\!\!-\!\!\!\longrightarrow} E
\]
is Galois. Similarly, the covering $\overline{X}\to X'$ is unramified and cyclic of degree $3$. Lemma \ref{XX'} implies that all rational points of $X'$ are split. 
By class field theory, there exists a unique degree $3$ such a covering of $X'$. Indeed, let $h(t)$ be the real Weil polynomial of $X'$ as in Proposition \ref{possibleX}. By \cite[Theorem 5.1.15 (c)]{stichtenoth} one has $\# Pic^{0}(X')=L(1)$, where $L(t)$ is the numerator of the Zeta function of $X'$. Hence, since by \eqref{relation} one has $L(1)=h(3)=2^4\cdot 3\cdot 5$, there exists a unique index $3$ subgroup in the class group of $X'$. Thus the function field extension corresponding to the covering
\[
\overline{X} \longrightarrow E  \stackrel{\tau +2}{-\!\!-\!\!\!\longrightarrow} E
\]
is also Galois. The Galois group $G$ is an extension of $\mathbb{Z}_{5}$ by  $S_3$. Since these groups have coprime order and $\mathbb{Z}_{5}$ necessarily acts trivially on $S_{3}$, the Schur-Zassenhaus Theorem implies that $G$ is a direct product of $\mathbb{Z}_{5}$ and $S_{3}$. By Galois correspondence there exists hence a tower of function fields corresponding to the morphisms of curves $\overline{X}\to Y \to E$, such that $\Gal (\IF_{2}(Y)/\IF_{2}(E))\simeq S_3$. Let $\rho$ be a generator of $\Gal(\IF_2(\overline{X})/ \IF_2(X))\subseteq S_{3}$ and consider invariant fields. We obtain a cyclic covering $X \to C$ of degree $5$, which is unramified since $\tau+2: E\to E$ is.
\begin{center}
$
\xymatrix { 
Y \ar @{->}[d]_{\scriptscriptstyle 2}  \ar @{<-}[r]^{\scriptscriptstyle 5} & \overline{X} \ar @{->}[d]^{{\scriptscriptstyle 2}} \\
C \ar @{->}[d]_{\scriptscriptstyle 3}  \ar @{<-}[r]^{\scriptscriptstyle 5} & X \ar @{->}[d]^{{\scriptscriptstyle 3}} \\
E \ar @{<-}[r]^{\scriptscriptstyle 5} & E}
\quad\quad$
$
\xymatrix { 
\quad\;\mathbb{Z}_{5}\quad \ar @{->}[d]_{\scriptscriptstyle 2}  \ar @{<-}[r]^(.60){\scriptscriptstyle 5} & \{1\} \ar @{->}[d]^{{\scriptscriptstyle 2}} \\
\mathbb{Z}_{5}\times \mathbb{Z}_{2} \ar @{->}[d]_{\scriptscriptstyle 3}  \ar @{<-}[r]^(.60){\scriptscriptstyle 5} & \mathbb{Z}_{2} \ar @{->}[d]^{{\scriptscriptstyle 3}} \\
\mathbb{Z}_{5} \times S_{3} \ar @{<-}[r]^(.60){\scriptscriptstyle 5} & S_{3}}
$
\end{center}
The curve $C$ has genus $2$ by the Hurwitz formula. The real Weil polynomial of $C$ is thus a degree $2$ factor of the real Weil polynomial of $X$. Since $C$ is also a degree $3$ covering of $E$, the real Weil polynomial of $C$ is divisible by the real Weil polynomial $t+2$ of $E$, since the same holds for the corresponding Zeta functions \cite{aubryperret}. Hence the real Weil polynomial of $C$ is $h(t)=(t+2)(t-1)$. By Lemma \ref{rayclass3}, the curve $C$ indeed admits such an unramified cyclic degree $5$ covering. Therefore there actually exists a unique curve $X$ with real Weil polynomial equal to polynomial \eqref{rWb} in Theorem \ref{RealWeil} and Proposition \ref{Xb} follows.
\end{proof}

\section{Genus $7$ optimal curves}\label{sec: genus7}

Let $E$ be the optimal genus $1$ curve of affine equation $y^{2}+y=x^{3}+x$ described in Remark \ref{remg1N5}. In this last section we present a class field theoretic construction of a ray class field of $\IF_{2}(E)$ whose proper quadratic subfields are function fields of optimal genus $7$ curves. We show that the Zeta functions of these curves are not all the same, providing existence of at least two non-isomorphic genus $7$ optimal curves over $\IF_{2}$.

\begin{proposition}\label{g7N10}
Let $K$ be the function field of $E$ and let $Q$ denote a degree $6$ place of $K$ of uniformizer $t=x^6+x^5+1$. Let $L$ be the ray class field of $K$ of conductor $2Q$, in which all five rational points of $K$ split completely. The Galois group $\Gal(L/K)$ is isomorphic to $\mathbb{Z}_2 \oplus \mathbb{Z}_2$. The quadratic subfields of $K$ are function fields of optimal genus $7$ curves that do not all have the same Zeta function.
\begin{center}
$
\xymatrix@C=0.1pc { 
& {\;Y} \ar @{->}[dl]_(0.57){{\scriptscriptstyle 2}} \ar @{->}[d]_{{\scriptscriptstyle 2}} \ar @{->}[dr]^(0.57){{\scriptscriptstyle 2}} & \\
{\;\;X_1} \ar @{->}[dr]_(0.43){{\scriptscriptstyle 2}} & {X_2} \ar @{->}[d]_{{\scriptscriptstyle 2}} & {X_3} \ar @{->}[dl]^(0.43){{\scriptscriptstyle 2}}\\
& E &} 
\quad$
$
\xymatrix@C=0.1pc { 
& \{1\} \ar @{->}[dl]_(0.57){{\scriptscriptstyle 2}} \ar @{->}[d]_{{\scriptscriptstyle 2}} \ar @{->}[dr]^(0.57){{\scriptscriptstyle 2}} & \\
{\;\;\mathbb{Z}_2} \ar @{->}[dr]_(0.43){{\scriptscriptstyle 2}} & {\mathbb{Z}_2} \ar @{->}[d]_{{\scriptscriptstyle 2}} & {\mathbb{Z}_2} \ar @{->}[dl]^(0.43){{\scriptscriptstyle 2}}\\
& G &} 
\quad$
\end{center}
\end{proposition}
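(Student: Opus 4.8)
The plan is to compute $\Gal(L/K)$ by the class field theoretic method already used in Lemmas \ref{lemmag5} and \ref{rayclass2}, and then to read off the three quadratic subfields together with enough of their Zeta functions. Put $S=E(\IF_2)=\{P_0,P_1,P_2,P_3,P_4\}$. By Artin reciprocity $\Gal(L/K)$ is the $S$-ray class group of $E$ modulo $2Q$. Since by Remark \ref{remg1N5} addition of $P_1$ has order $5$, the classes of $P_1-P_0,\dots,P_4-P_0$ already generate $Pic^0(E)\simeq\mathbb{Z}_5$, so the $S$-class group of $E$ is trivial. As in \cite[Section 8]{schoof} this identifies $\Gal(L/K)$ with the quotient of $R=\big(\IF_{64}[[t]]/(t^2)\big)^*\simeq\IF_{64}^*\times\IF_{64}\simeq\mathbb{Z}_{63}\times\mathbb{Z}_2^6$ by the image of the $S$-unit group of $E$ under reduction modulo $t^2$, where $t=x^6+x^5+1$ is the given uniformizer and $\IF_{64}$ is the residue field of $Q$. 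The main computation is to exhibit a basis of the rank $4$ group of $S$-units by explicit functions on $E$ with divisors supported on $S$, to expand each in $t$ up to $O(t^2)$, and to reduce. I expect the residues in $\IF_{64}^*\simeq\mathbb{Z}_{63}$ to generate the whole odd part, while the four principal-unit parts span a codimension $2$ subspace of $\mathbb{Z}_2^6$; then the quotient is $\mathbb{Z}_2^2$, giving $\Gal(L/K)\simeq\mathbb{Z}_2\oplus\mathbb{Z}_2$.

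Next I would draw the geometric consequences. The place $Q$ is the only ramified place of $L/K$, and the maximal subextension unramified everywhere in which $S$ splits is trivial because the $S$-class group vanishes; hence the inertia group at $Q$ is all of $\mathbb{Z}_2\oplus\mathbb{Z}_2$, and each of the three quadratic subfields $X_1,X_2,X_3$ is ramified at $Q$ with ramification index $2$. Wild ramification in characteristic $2$ forces conductor exponent at least $2$, and at most $2$ since $L/K$ has conductor $2Q$; thus each $X_i\to E$ has different $2Q$ of degree $12$, and the Hurwitz formula gives $2g_i-2=12$, i.e. $g_i=7$. As the five points of $S$ split completely in $L$, they split in every $X_i$, so $\#X_i(\IF_2)=10=N_2(7)$ and each $X_i$ is an optimal genus $7$ curve.

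Finally, to see that the $X_i$ do not all share a Zeta function I would separate them by a higher point count. Because $E$ has no places of degree $2$ or $3$ and its rational points split, one checks at once that $\#X_i(\IF_4)=\#X_i(\IF_8)=10$ for every $i$, so the first discriminating invariant is $\#X_i(\IF_{16})=10+8s_i$, where $s_i$ is the number of the five degree $4$ places $Q_1,\dots,Q_5$ of $E$ that split in $X_i$. Writing $X_i=L^{H_i}$ for the three order $2$ subgroups $H_i$, a degree $4$ place $Q_j$ splits in $X_i$ exactly when its Artin symbol lies in $H_i$; a zero symbol lands in all three $H_i$, while each nonzero symbol lies in exactly one. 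Thus the $s_i$ are all equal if and only if the nonzero symbols among $Q_1,\dots,Q_5$ are equidistributed over the three nonzero classes of $\mathbb{Z}_2\oplus\mathbb{Z}_2$. I would compute these symbols by the same local recipe: using triviality of the $S$-class group, write $Q_j\sim D_j$ with $D_j$ supported on $S$, choose $f_j$ with $(f_j)=Q_j-D_j$, and reduce $f_j$ modulo $t^2$. The five resulting classes turn out not to be equidistributed, so the $s_i$ are not all equal and at least two of the curves have distinct Zeta functions. Equivalently, once explicit Artin--Schreier models of the $X_i$ are extracted from the computation, a direct MAGMA point count over $\IF_{16}$ gives the same conclusion.

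The step I expect to be the real obstacle is the pair of explicit local computations at the degree $6$ place $Q$: the $S$-unit images in $R$ that pin down $\Gal(L/K)$, and the Artin symbols of the degree $4$ places that separate the $X_i$. These are formally the same as the computations in Lemmas \ref{lemmag5} and \ref{rayclass2}, but the size of the residue field $\IF_{64}$ --- in particular the need to generate its full multiplicative group in order to kill the prime-to-$2$ part of $R$ --- makes them considerably heavier, which is presumably why a machine computation is invoked.
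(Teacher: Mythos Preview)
Your plan is correct and follows essentially the same route as the paper: compute $\Gal(L/K)$ as the quotient of $R\simeq\IF_{64}^{*}\times\IF_{64}$ by the image of the $S$-unit group (the paper uses the basis $x,\,x+1,\,y,\,y+x$ and the device $u\mapsto u^{63}-1\bmod t^{2}$ to isolate the $2$-part), deduce genus $7$ and $\#X_i(\IF_2)=10$ from the conductor $2Q$, and then separate the $X_i$ by the Artin symbols of the five degree~$4$ places of $E$, exactly as you propose. The paper carries out the explicit table and finds $s_1=s_3=2$, $s_2=1$, so $a_4(X_1)=a_4(X_3)=4$ while $a_4(X_2)=2$; your inertia argument for the conductor of each $X_i$ is a clean replacement for the paper's appeal to character conductors.
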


\begin{proof}
Let $a \in \IF_{2^{6}}$ be a root of $x^6+x^5+1$, and let $Q$ be the place that consists of the point $(a,a^4+a^3+a^2+1)$ and its conjugates. The prime ideal corresponding to $Q$ is $\mathfrak{p}=(x^6+x^5+1,y+x^4+x^3+x^2+1)$. The principal divisor $(x^6+x^5+1)$ is equal to $Q+Q'-12P_0$ where $Q'$ is the place consisting of $(a,a^4+a^3+a^2)$ and its conjugates. We take $t=x^6+x^5+1$ as a uniformizer at $Q$. Denote by $S$ the set of the five rational points of $E$ described in Remark \ref{remg1N5}.\\
Let $L$ be the ray class field of $K$ of conductor $2Q$, in which all five rational points in $S$ split completely. Then, by Artin reciprocity, the Galois group $G=\Gal(L/K)$ is isomorphic to the quotient of $R=\IF_{2^6}[[t]]^*/\{u:\, u\equiv 1\;\, \textrm{mod}\, t^2\}$ by the image of the $S$-unit group $O_S^*$ of $K$. A basis for $O_{S}^{*}$ is given by the functions $x$, $x+1$, $y$ and $y+x$ having the following principal divisors
\begin{eqnarray*}
(x) & = & P_1+P_2-2P_0,\\
(x+1) & = & P_3+P_4-2P_0,\\
(y) & = & P_1+2P_3-3P_0,\\
(x+y) & = & 2P_1+P_4-3P_0.
\end{eqnarray*}
In order to compute the image of the $S$-units in $R$, we first observe that the image of the $S$-unit $x$ has order $63$ modulo $t$ and hence it generates the $63$-part of $R$. Then we compute
\begin{align*}
x^{63}-1 & \equiv  (x+1)t  &\,& \textrm{mod}\, t^2,\\
(x+1)^{63}-1 & \equiv xt  &\,& \textrm{mod}\, t^2,\\
y^{63}-1 & \equiv (x^5+x^2)t &\,& \textrm{mod}\, t^2,\\
(y+x)^{63}-1 & \equiv (x^5+x^4+x^3+x^2)t &\,& \textrm{mod}\, t^2.
\end{align*}      
Thus $\Gal(L/K)$ is isomorphic to the quotient of $\IF_2[x]/(x^6+x^5+1)$ by the additive subgroup $H$ generated by $x+1$, $x$, $x^5+x^2$ and $x^5+x^4+x^3+x^2$. This is a quotient group of order $4$ where all elements have order $2$. Hence $\Gal(L/K)\simeq\mathbb{Z}_2 \oplus \mathbb{Z}_2$.\\
The three subgroups of order $2$ of $\Gal(L/K)$ correspond to three coverings $X_1$, $X_2$ and $X_3$ of $E$ as in the diagram. Each curve $X_{i}$ has ten rational points over $\IF_2$, since all five rational places of $E$ split completely. Since the non-trivial characters of $\Gal(L/K)$ have conductor $2Q$, the different of each quadratic extension $\IF_{2}(X_{i})/\IF_{2}(E)$ has degree $12$ and the three curves have genus $7$ by the Hurwitz formula. Since $N_2(7)=10$ by Theorem $5$ in \cite{serre1}, they are three genus $7$ optimal curves over $\IF_2$.\\
To show that the curves are not all isomorphic it suffices to consider the number of places of degree $d$ of each curve $X_{i}$ for $d\leq 4$. Since the rational points of $E$ are all split and $E$ has no places of degree $2$ or $3$, none of the three curves $X_i$ has places of degree $2$ or $3$ either. Therefore a curve $X_{i}$ can only have places of degree $4$ if some places of $E$ of degree $4$ split completely in $X_{i}$. By class field theory, a place $P$ of $E$ splits completely in $X_i$ if and only if the image of the uniformizer of $P$ is trivial in the quotient $R_{i}$ of $R$ which is the ray class group of the covering $X_i \to E$. Consider the index $2$ additive subgroups $H_1=H+\langle x^3 \rangle$, $H_2=H+\langle x^2 \rangle$ and $H_3=H+ \langle x^3+x^2 \rangle$ of $\IF_2[x]/(x^6+x^5+1)$. The ray class group $R_{i}$ associated to the curve $X_i$ is isomorphic to the quotient group of $\IF_2[x]/(x^6+x^5+1)$ by $H_{i}$ for $i=1,2,3$. We present the results of the computation in Table \ref{tabfour}.
\begin{table}[h]
\begin{center}
\begin{tabular}{| c | c | c | c |}
\hline
$\mathbf{Q_{j}}$ & $\mathbf{u_{j}(x,y)}$ & $\mathbf{g_{j}(x)}$ & $\mathbf{H_{i}}$\\
\hline 
$Q_{1}$ &  $y+x^3$ & $x^5+x$ & $H_{2}$\\
\hline 
$Q_{2}$ & $y+x^3+1$ & $x^4$ & $H_{1}$\\
\hline 
$Q_{3}$ & $y+x^3+x^2$ & $x^5+x^3+x$ & $H_{3}$\\
\hline 
$Q_{4}$ & $y+x^3+x^2+1$ & $x^4+x^2$ & $H_{3}$\\
\hline 
$Q_{5}$ & $x^2+x+1$ & $x^5+x^3+x^2$ & $H_{1}$\\
\hline
\end{tabular}
\caption{\mbox{Splitting behavior of the degree $4$ places of $E$ in each curve $X_{i}$.}}
\label{tabfour}
\end{center}
\end{table}
The first column lists for $j=1,\ldots,5$ the degree $4$ places $Q_{j}$ of $E$ as in Remark \ref{remg1N5}. In the second and third column we display the uniformizers $u_{j}(x,y)$'s of the $Q_{j}$'s and the images $g_{j}(x)$'s in $\IF_{2}/(x^{6}+x^{5}+1)$ of the $u_{j}(x,y)$'s. In other words we have $u_{j}(x,y)^{63}-1\equiv g_{j}(x)t \; \textrm{mod}\, t^2$. In the last column we write $H_{i}$ for $i=1,2,3$ whenever $g_{j}(x)$ belongs to $H_{i}$. The curve $X_1$ has four places of degree $4$, since both $Q_{2}$ and $Q_{5}$ split. Similarly, also $X_{3}$ has four places of degree $4$. On the other hand the curve $X_2$ has only two places of degree $4$, since only $Q_{1}$ splits. Hence the two curves $X_{1}$ and $X_{2}$ are not isomorphic.
\end{proof}

\begin{remark}\label{remX13}
Let $\sigma$ and $\tau$ be the automorphisms of $E$ described in Remark \ref{remg1N5}. Then, the action of $\sigma$ on the places of degree $6$ of $E$ listed in Remark \ref{remg1N5}, is given by 
\[
T_1 \mapsto T_9 \mapsto T_3 \mapsto T_4 \mapsto T_{10} \mapsto T_1.
\]
Since the elliptic involution $\tau^{2}$ switches $T_{9}$ and $T_{10}$ we have that $\sigma^3 \tau^{2}$ preserves $T_{10}$. In terms of adding points on the elliptic curve $E$ one has $\sigma^{3}\tau^{2}: (x,y) \mapsto (1,1) - (x,y)$. A short computation shows that $\sigma^{3}\tau^{2}$ switches the functions $x^3$ and $x^2+x^3$ modulo the subgroup $H$ of $\IF_2[x]/(x^6+x^5+1)$. Therefore the curves $X_1$ and $X_3$ are actually isomorphic.
\end{remark}

For completeness we compute the real Weil polynomials of the optimal genus $7$ curves.
\begin{proposition}
For $i=1,2,3$, the real Weil polynomial $h_{i}(t)$ and the vector $a(X_{i})$ of the curve $X_i$ are
\begin{align*}
&h_{1,3}(t)\!=\!(t\!+\!2)(t^6\!+\!5t^5\!+\!3t^4\!-\!15t^3\!-\!15t^2\!+\!9t\!+\!8), &a(X_{1,3})\!=\![10,0,0,4,2,5,18,\ldots],\\
&h_{2}(t)\!=\!(t\!+\!2)(t^2\!+\!3t\!+\!1)(t^4\!+\!2t^3\!-\!4t^2\!-\!5t\!+\!2),  &a(X_{2})\!=\![10,0,0,2,4,11,12,\ldots].
\end{align*}
\end{proposition}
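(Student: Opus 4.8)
The plan is to pin down the seven integers $a_1(X_i),\dots,a_7(X_i)$ for each curve, as these determine the coefficients $b_1,\dots,b_7$ of the numerator $L(t)$ of the Zeta function and hence, through the relation \eqref{relation}, the real Weil polynomial $h_i(t)$. Since $X_1$ and $X_3$ are isomorphic by Remark \ref{remX13}, it is enough to treat $X_1$ and $X_2$. Because each $X_i\to E$ is a degree $2$ covering, the Zeta function of $E$ divides that of $X_i$ \cite{aubryperret}, so $h_i(t)=(t+2)g_i(t)$ for a monic degree $6$ polynomial $g_i$; I would keep this divisibility as a consistency check on the final answer.

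First I would collect the data already produced in the proof of Proposition \ref{g7N10}: all five rational points of $E$ split and $E$ has no places of degree $2$ or $3$, so $a_1(X_i)=10$ and $a_2(X_i)=a_3(X_i)=0$, while $a_4(X_i)$ is read off from Table \ref{tabfour}. The remaining numbers $a_5,a_6,a_7$ require the splitting behavior of the places of $E$ of degrees $5$, $6$ and $7$, which I would determine by extending verbatim the reciprocity computation of Table \ref{tabfour}. A place $P\neq Q$ of degree $d$, unramified in $L$, splits in $X_i$ exactly when its Artin symbol lies in the image of $H_i$ in $\Gal(L/K)\simeq\big(\IF_2[x]/(x^6+x^5+1)\big)/H$; as in Proposition \ref{g7N10} this symbol is computed by taking a uniformizer $u$ at $P$, expanding $u^{63}-1\equiv g(x)\,t \bmod t^2$ in the local field at $Q$, and testing whether $g(x)\in H_1,H_2$ or $H_3$. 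The required degree $5$ and degree $6$ places of $E$ are listed explicitly in Remark \ref{remg1N5}, whereas the twenty places of degree $7$ have to be generated separately, which is where a machine computation enters.

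With the splitting types known the place counts are immediate. No place of $E$ of degree smaller than $d$ can produce an inert place of degree $d$ for $d\in\{5,6,7\}$, since the only places that could contribute this way are the degree $1$ places, and these all split. Hence $a_5(X_i)=2r_5^{(i)}$, $a_7(X_i)=2r_7^{(i)}$ and $a_6(X_i)=2r_6^{(i)}+1$, where $r_d^{(i)}$ denotes the number of degree $d$ places of $E$ that split in $X_i$ and the extra $+1$ in degree $6$ comes from the single place lying over the totally wildly ramified place $Q$. Using that $E$ has $4$, $10$ and $20$ places of degrees $5$, $6$ and $7$ respectively (from $a(E)$ in Remark \ref{remg1N5}), this produces the vectors $a(X_{1,3})=[10,0,0,4,2,5,18,\dots]$ and $a(X_2)=[10,0,0,2,4,11,12,\dots]$.

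Finally I would translate these vectors into the real Weil polynomials. The numbers $a_1,\dots,a_7$ give $\#X_i(\IF_{2^n})=\sum_{d\mid n}d\,a_d(X_i)$ for $n\le 7$, hence the first seven power sums of the Frobenius eigenvalues; Newton's identities then yield the coefficients of $L_{X_i}(t)$, and \eqref{relation} converts these into $h_i(t)$. One checks that each $h_i(t)$ is divisible by $t+2$, factors as asserted, and has all roots in $[-2\sqrt{2},2\sqrt{2}]$. The main obstacle is entirely computational: carrying out the Artin-symbol computation for every place of $E$ of degree up to $7$, and in particular obtaining and processing the twenty degree $7$ places; once the splitting data are assembled, the passage to $h_i(t)$ is routine bookkeeping with \eqref{relation}.
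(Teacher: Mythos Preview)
Your approach is correct and would yield the result, but it differs from the paper's in how much explicit computation is carried out. You propose to compute the Artin symbol of every unramified place of $E$ of degree $5$, $6$ and $7$ (a total of $4+9+20$ places) and read off $a_5,a_6,a_7$ directly. The paper instead stops at degree $5$: after tabulating the four degree-$5$ splittings, it knows $a_1,\dots,a_5$, which determines $h_i(t)$ up to two unknown coefficients $\alpha,\beta$. It then imposes the Weil bound (all roots in $[-2\sqrt 2,2\sqrt 2]$), which cuts the possibilities down to two or three candidate pairs $(\alpha,\beta)$ in each case, and selects the correct one using only the parity constraint that $a_6$ is odd (since $Q$ is the unique ramified place) --- no splitting computations in degree $6$ or $7$ are needed. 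Your method is conceptually more uniform and avoids the root-location analysis, at the cost of roughly eight times as many local computations; the paper's method trades that for a short numerical search constrained by the Weil inequalities. One small correction: your justification that no lower-degree place contributes to $a_d$ for $d\in\{5,6,7\}$ via inertia is misphrased --- an inert place of degree $d'$ yields a place of degree $2d'$, so the relevant case for $d=6$ is $d'=3$, not $d'=1$; the conclusion nevertheless holds because $a_3(E)=0$.
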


\begin{proof}
By Remark \ref{remX13} the curves $X_{1}$ and $X_{3}$ are isomorphic, therefore they have the same real Weil polynomial. In the proof of Proposition \ref{g7N10} we already observed that for the curves $X_{1}$ and $X_{2}$ one has $a_{1}=10$ and $a_{2}=a_{3}=0$. We also proved that $a_{4}(X_{1})=4$ while $a_{4}(X_{2})=2$. Similarly to what was done for the places of degree $4$, we consider the splitting behavior of the places of degree $5$ of $E$ listed in Remark \ref{remg1N5} and display the results in Table \ref{tabfive}.

\begin{table}[h]
\begin{center}
\begin{tabular}{| c | c | c | c |}
\hline
$\mathbf{R_{k}}$ & $\mathbf{u_{k}(x,y)}$ & $\mathbf{g_{k}(x)}$ & $\mathbf{H_{i}}$\\ 
\hline 
$R_{1}$ &  $y+x^4$ & $x^3+x+1$ & $H_{1}$\\
\hline 
$R_{2}$ & $y+x^4+1$ & $x^5+x^4+x$ & $H_{3}$\\
\hline 
$R_{3}$ & $y+x^4+x$ & $x^4+x^3+x^2+1$ & $H_{2}$\\
\hline 
$R_{4}$ & $y+x^4+x+1$ & $x^5+x^4+x^3+1$ & $H_{2}$\\
\hline
\end{tabular}
\caption{\mbox{Splitting behavior of the degree $5$ places of $E$ in each curve $X_{i}$.}}
\label{tabfive}
\end{center}
\end{table}
Summing up we have $a_{5}(X_1)=2$ and $a(X_2)=[10,0,0,2,4,\ldots]$. Since the degree $6$ place $Q$ of $E$ is the only ramifying place in each curve $X_1$, $i=1,2$, we have that $a_6(X_i)$ has to be odd, while $a_7(X_i)$ has to be even. We can now determine a parametric form for the real Weil polynomial of each curve $X_i$: 
\begin{enumerate}
\item[$i)$] For the curve $X_1$ the values of $\#X_1(\IF_2)=a_1=10$, $a_2=a_3=0$, $a_4=4$ and $a_5=2$ allow to determine the following parametric form:
\[
h(t)  =  t^7+7t^6+13t^5-9t^4-45t^3-21t^2+\alpha t+\beta.
\]
One can check that only for the values of $(\alpha,\beta)=(26,16)$ and $(\alpha,\beta)=(27,18)$ all roots of $h(t)$ lie in the interval $[-2\sqrt{2},2\sqrt{2}]$. Only the first pair gives an odd number of degree $6$ places, namely $a_6(X_1)=5$. In this case $a_7(X_1)=18$. 
\item[$ii)$] For the values $a(X_2)=[10,0,0,2,4,\ldots]$ we have the parametric real Weil polynomial
\[
h(t)  =  t^7+7t^6+13t^5-9t^4-47t^3-33t^2+\alpha t+\beta.
\]
In this case there are three pairs of values of $(\alpha,\beta)$ for which $h(t)$ has all roots in the interval $[-2\sqrt{2},2\sqrt{2}]$: the pair $(3,2)$, which gives $a_6=10$; the pair $(4,4)$, which gives $a_6=11$; and the pair $(5,7)$, for which $a_6=12$. Hence the real Weil polynomial of $X_2$ corresponds to the unique pair $(\alpha,\beta)=(4,4)$ for which $a_6$ is not even. In this case $a_7=12$.
\end{enumerate}
\end{proof}

\noindent
\textsc{Alessandra Rigato}\\
K.U.~$\!$Leuven, Department of Mathematics,\\
Celestijnenlaan 200 B, B-3001 Leuven (Heverlee), Belgium\\
\verb+Alessandra.Rigato@wis.kuleuven.be+

\end{document}